\theoremstyle{definition}
\newtheorem{defn}{Definition}[section]
\newtheorem{thm}[defn]{Theorem}
\newtheorem{lem}[defn]{Lemma}
\newtheorem{question}[defn]{Question}
\newtheorem{obs}[defn]{Observation}
\newtheorem{claim}[defn]{claim}
\title[Product of graphs, graph homomorphism, Antichains, cofinal subsets of posets]{Chromatic number of the product of graphs, graph homomorphisms, Antichains and cofinal subsets of posets without AC}
\author{Amitayu Banerjee}
\author{Zal\'{a}n Gyenis}
\address{Department of Logic, Institute of Philosophy, E\"otv\"os Lor\'and University,
M\'{u}zeum krt. 4/i Budapest, H-1088 Hungary}
\email{banerjee.amitayu@gmail.com}
\address{Institute of Philosophy, Department of Logic, Jagiellonian University, Grodzka 52, 33-332, Krak\'{o}w, Poland}
\email{zalan.gyenis@gmail.com}
\keywords{Chromatic number of product of graphs, ultrafilter lemma, permutation model, strong compactness, symmetric extension}
\keywords{Chromatic number of the product of graphs, Graph homomorphisms, Cofinal well-founded subsets of partially ordered sets, Chains and antichains of partially ordered sets, Linearly-ordered structures, Fraenkel-Mostowski (FM) permutation models of ZFA+$\neg AC$}
\begin{document}
\maketitle
\begin{abstract}In set theory without the Axiom of Choice (AC), we observe new relations of the following statements with weak choice principles.
\begin{itemize}
    \item If in a partially ordered set, all chains are finite and all antichains are countable, then the set is countable.
    \item If in a partially ordered set, all chains are finite and all antichains have size $\aleph_{\alpha}$, then the set has size $\aleph_{\alpha}$ for any regular $\aleph_{\alpha}$.
    \item CS (Every partially ordered set without a maximal element has two disjoint cofinal subsets).
    \item  CWF (Every partially ordered set has a cofinal well-founded subset).
    \item DT (Dilworth's decomposition theorem for infinite p.o.sets of finite width).
\end{itemize}
We also study a graph homomorphism problem and a problem due to Andr\'{a}s Hajnal without AC.
Further, we study a few statements restricted to linearly-ordered structures without AC.
\end{abstract}

\section{Introduction}
Firstly, the first author observes the following in ZFA (Zermelo-Fraenkel set theory with atoms). 

\begin{enumerate}
    \item In \textbf{Problem 15, Chapter 11 of \cite{KT2006}}, applying Zorn's lemma, Komj\'{a}th and Totik proved the statement ``Every partially ordered set without a maximal element has two disjoint cofinal subsets'' (CS). In \textbf{Theorem 3.26 of \cite{THS2016}}, Tachtsis, Howard and Saveliev proved that CS does not imply `there are no amorphous sets' in ZFA. We observe that CS $\not\rightarrow AC^{\omega}_{fin}$ (the axiom of choice for countably infinite familes of non-empty finite sets), CS $\not\rightarrow AC^{-}_{n}$ (`Every infinite family of $n$-element sets has a partial choice function') \footnote{It is easy to see that $AC^{-}_{n}$ follows from $AC_{n}$ (Axiom of choice for $n$-element sets).} for every $2\leq n < \omega$ and CS $\not\rightarrow LOKW_{4}^{-}$ (Every infinite linearly orderable family $\mathcal{A}$ of $4$-element sets has a partial Kinna--Wegner selection function)\footnote{We denote the principle {`Every infinite linearly orderable family $\mathcal{A}$ of $n$-element sets has a partial Kinna--Wegner selection function'} by $LOKW^{-}_{n}$ (see \cite{HT2019}).} in ZFA.
    
    \item  In \textbf{Problem 14, Chapter 11 of \cite{KT2006}}, applying the well-ordering theorem, Komj\'{a}th and Totik proved the statement ``Every partially ordered set has a cofinal well-founded subset'' (CWF). In \textbf{Theorem 10(ii) of \cite{Tac2017}}, Tachtsis proved that CWF holds in the basic Fraenkel model. Moreover, in \textbf{Lemma 5 of \cite{Tac2017}}, Tachtsis proved that CWF is equivalent to AC in ZF. We observe that CWF $\not\rightarrow AC^{\omega}_{fin}$, CWF $\not\rightarrow AC^{-}_{n}$  for every $2\leq n < \omega$ and CWF $\not\rightarrow LOKW_{4}^{-}$ in ZFA.
    
    \item In \textbf{Problem 7, Chapter 11 of \cite{KT2006}}, applying Zorn's lemma, Komj\'{a}th and Totik proved that if in a partially ordered set, all chains are finite and all antichains are countable, then the set is countable. We observe that {\em `If in a partially ordered set, all chains are finite and all antichains are countable, then the set is countable'} $\not\rightarrow AC_{n}^{-} (\forall n\geq 2)$, 
    $\not\rightarrow $ `There are no amorphous sets' in ZFA which are new results. Moreover, we prove that
    {\em `For any regular $\aleph_{\alpha}$, if in a partially ordered set, all chains are finite and all antichains have size $\aleph_{\alpha}$, then the set has size $\aleph_{\alpha}$'} $\not\rightarrow AC_{n}^{-} (\forall n\geq 2)$, $\not\rightarrow $ `There are no amorphous sets' in ZFA. 

    \item Dilworth \cite{Dil1950} proved the following statement: {\em `If $\mathbb{P}$ is an arbitrary p.o.set, and $k$ is a natural number such that $\mathbb{P}$ has no antichains of size $k + 1$ while at least one $k$-element subset of $\mathbb{P}$ is an antichain, then $\mathbb{P}$ can be partitioned into k chains'}, we abbreviate by DT (see \textbf{Problem 4, Chapter 11 of \cite{KT2006}} also). Tachtsis \cite{Tac2019} investigated the possible placement of DT in the hierarchy of weak choice principles. He proved that DT does not imply $AC^{\omega}_{fin}$ as well as $AC_{2}$ (Every family of pairs has a choice function). We observe that DT does not imply $AC_{n}^{-}$ for any $2\leq n < \omega$ in ZFA. In particular, we observe that DT holds in the permutation model of \textbf{Theorem 8} of \cite{HT2019}, due to Halbeisen and Tachtsis.  We also observe that a weaker form of \L{}o\'{s}'s lemma (Form 253 of \cite{HR1998}) fails in the permutation model of \textbf{Theorem 8} of \cite{HT2019}.
    
    \item In \textbf{Theorem 4.5.2} of \cite{Kom}, Komj\'{a}th sketched the following generalization of the $n$-coloring theorem (For every graph $G=(V,E)$ if every finite subgraph of $G$ is $n$-colorable then $G$ is $n$-colorable) applying the Boolean prime ideal theorem (BPI): {\em `For an infinite graph $G=(V_{G}, E_{G})$ and a finite graph $H=(V_{H}, E_{H})$, if every finite subgraph of $G$ has a homomorphism into $H$, then so has $G$'} we abbreviate by $\mathcal{P}_{G,H}$. We observe that if $X\in \{AC_{3}, AC^{\omega}_{fin}\}$, then $\mathcal{P}_{G,H}$ restricted to finite graph $H$ with 2 vertices does not imply $X$ in ZFA.
\end{enumerate}  

Secondly, we study a weaker formulation of a problem due to Andr\'{a}s Hajnal in ZFA.

\begin{enumerate}
    \item In \textbf{Theorem 2} of \cite{Haj1985}, Hajnal proved that if the chromatic number of a graph $G_{1}$ is finite (say $k<\omega$), and the chromatic number of another graph $G_{2}$ is infinite, then the chromatic number of $G_{1}\times G_{2}$ is $k$ using the G\"{o}del's Compactness theorem. In the solution of \textbf{Problem 12, Chapter 23 of \cite{KT2006}}, Komj\'{a}th provided another argument using the Ultrafilter lemma. For a natural number $k<\omega$, we denote  by $\mathcal{P}_{k}$ the following statement. 
    \begin{center}
    {\em `$\chi(E_{G_{1}})=k<\omega$ and $\chi(E_{G_{2}})\geq\omega$ implies $\chi(E_{G_{1}\times G_{2}})=k$.'}
    \end{center}
    We observe that if $X\in \{AC_{3}, AC^{\omega}_{fin}\}$, then  $\mathcal{P}_{k}\not\rightarrow X$ in ZFA when $k=3$. 
\end{enumerate}
Lastly, we study a few algebraic and graph-theoretic statements restricted to linearly-ordered structures without AC. We abbreviate the statement `The union of a well-orderable family of finite sets is well-orderable' by  $UT(WO,fin,WO)$. In \textbf{Theorem 3.1 (i)} of \cite{Tac2019}, Tachtsis proved DT for well-ordered infinite p.o.sets with finite width in ZF applying the following theorem.

\begin{thm}{\em (\textbf{Theorem 1} of \cite{Loeb1965}).}
{\em Let $\{X_{i}\}_ {i \in I}$ be a family of compact spaces which is indexed by a set $I$ on which there is a well-ordering $\leq$. If $I$ is an infinite set and there is a choice function $F$ on the collection $\{C$ : C is
closed, $C \not= \emptyset, C \subset X_{i}$ for some $i\in I\}$, then the product space
$\Pi_{i\in I}X_{i}$ is compact in the product topology.}
\end{thm}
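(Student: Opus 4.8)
The plan is to prove compactness via the finite intersection property (FIP): it suffices to show that every family $\mathcal{A}$ of closed subsets of $X=\prod_{i\in I}X_{i}$ with the FIP has $\bigcap\mathcal{A}\neq\emptyset$. Replacing $\mathcal{A}$ by the family of all finite intersections of its members (a definable operation requiring no choice), I may assume $\mathcal{A}$ is closed under finite intersections. Using the given well-ordering $\leq$, I enumerate $I$ as $\langle i_{\xi}:\xi<\kappa\rangle$, where $\kappa$ is the order type, and I construct a point $x=(x_{i})_{i\in I}\in\bigcap\mathcal{A}$ by transfinite recursion on the coordinates, carrying along at each stage $\eta\leq\kappa$ the auxiliary family
\[
\mathcal{A}_{\eta}=\{A\cap\pi_{i_{\xi_{1}}}^{-1}(U_{1})\cap\cdots\cap\pi_{i_{\xi_{m}}}^{-1}(U_{m}) : A\in\mathcal{A},\ \xi_{1},\dots,\xi_{m}<\eta,\ U_{j}\text{ open nbhd of }x_{i_{\xi_{j}}}\},
\]
where $\pi_{i}$ is the projection onto $X_{i}$. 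The invariant I maintain is that each $\mathcal{A}_{\eta}$ is closed under finite intersections and has the FIP.

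At a successor stage $\eta=\xi+1$ I examine the $i_{\xi}$-coordinate. Since $\mathcal{A}_{\xi}$ is closed under finite intersections and has the FIP, the family $\{\overline{\pi_{i_{\xi}}(B)}:B\in\mathcal{A}_{\xi}\}$ of nonempty closed subsets of $X_{i_{\xi}}$ inherits the FIP, because the projection of a nonempty finite intersection is a nonempty subset of the intersection of the separate projections, hence of the intersection of their closures. As $X_{i_{\xi}}$ is compact, $C_{\xi}:=\bigcap_{B\in\mathcal{A}_{\xi}}\overline{\pi_{i_{\xi}}(B)}$ is a nonempty closed subset of $X_{i_{\xi}}$, and I set $x_{i_{\xi}}:=F(C_{\xi})$. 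This is the sole appeal to choice, and it is licensed precisely because $C_{\xi}$ is a nonempty closed subset of one of the factors $X_{i}$, which is the domain of $F$. To see the FIP passes to $\mathcal{A}_{\xi+1}$, take finitely many of its members, gather their common $\mathcal{A}_{\xi}$-parts into a single $B\in\mathcal{A}_{\xi}$ and their $i_{\xi}$-neighborhoods into a single open $U\ni x_{i_{\xi}}$; since $x_{i_{\xi}}\in C_{\xi}\subseteq\overline{\pi_{i_{\xi}}(B)}$, the set $U$ meets $\pi_{i_{\xi}}(B)$, producing a point of $B\cap\pi_{i_{\xi}}^{-1}(U)$, so the intersection is nonempty.

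Limit stages require no work: any finite subfamily of $\mathcal{A}_{\eta}$ mentions only finitely many coordinates, which are bounded below the limit $\eta$, so it already lies in some $\mathcal{A}_{\xi}$ with $\xi<\eta$ and is therefore nonempty; thus the FIP is preserved and, crucially, no choice is made at limits. After $\kappa$ steps I obtain $x=(x_{i})_{i\in I}$. To verify $x\in\bigcap\mathcal{A}$, fix $A\in\mathcal{A}$; every basic open neighborhood of $x$ has the form $\bigcap_{j\leq m}\pi_{i_{\xi_{j}}}^{-1}(U_{j})$, and choosing $\eta>\max_{j}\xi_{j}$, the set $A\cap\bigcap_{j}\pi_{i_{\xi_{j}}}^{-1}(U_{j})$ belongs to $\mathcal{A}_{\eta}$ and is hence nonempty; so every neighborhood of $x$ meets $A$, and, $A$ being closed, $x\in A$.

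The main obstacle is the bookkeeping of choices: I must ensure the construction is a genuine transfinite recursion whose every value is determined by the fixed $F$ and the fixed well-ordering, so that no stronger form of the axiom of choice sneaks in beyond the stated hypotheses. Concretely, $x_{i_{\xi}}$ is a definite function of the sequence $\langle x_{i_{\xi'}}:\xi'<\xi\rangle$ (through $\mathcal{A}_{\xi}$, then $C_{\xi}$, then $F$), so the Transfinite Recursion Theorem delivers the whole sequence in $\mathrm{ZF}$, and the compactness of each factor is used only as its bare FIP statement, which is choice-free. The one genuinely delicate verification is the FIP-preservation at successor steps, where the closure in the definition of $C_{\xi}$ is exactly what guarantees that a neighborhood of the chosen point meets the relevant projection.
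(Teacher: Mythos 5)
Your argument is correct: the transfinite recursion on the well-ordered index set, choosing $x_{i_{\xi}}=F(C_{\xi})$ with $C_{\xi}$ the (nonempty, closed) intersection of the closures of the projections of the current FIP-family, uses choice only through the given $F$ and the given well-ordering, and the FIP-preservation at successor and limit stages is verified correctly. Note that the paper itself does not prove this statement --- it is quoted as \textbf{Theorem 1} of \cite{Loeb1965} and used as a black box --- but your proof is essentially Loeb's original argument, so there is nothing to object to beyond minor bookkeeping (e.g.\ treating members of $\mathcal{A}_{\xi+1}$ with no $\pi_{i_{\xi}}^{-1}(U)$ factor by taking $U=X_{i_{\xi}}$, which you implicitly do).
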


Using the same technique from \textbf{Theorem 3.1} of \cite{Tac2019}, we prove a few algebraic and graph-theoretic statements restricted to well-ordered sets, either in ZF or in ZF + $UT(WO,fin,WO)$. Consequently, those statements restricted to linearly ordered sets are true, in permutation models where LW (Every linearly ordered set can be well-ordered) holds. In particular, we observe the following.

\begin{enumerate}
    \item In \textbf{Theorem 18 of \cite{HT2013}}, Howard and Tachtsis obtained that for every finite field $\mathcal{F}=\langle F,...\rangle$, for every nontrivial vector space $V$ over $\mathcal{F}$, there exists a non-zero linear functional $f:V\rightarrow F$ applying BPI. Fix an arbitrary $2\leq n < \omega$. We observe that `{\em For every finite field $\mathcal{F}=\langle F,...\rangle$, for every nontrivial linearly-ordered vector space $V$ over $\mathcal{F}$, there exists a non-zero linear functional $f: V\rightarrow F$}' $\not\rightarrow AC^{\omega}_{fin}$, $\not\rightarrow LOKW_{4}^{-}$, and $\not\rightarrow AC^{-}_{n}$ in ZFA.  
    
    \item Fix an arbitrary $2\leq n < \omega$. We observe that `{\em For an infinite  graph $G=(V_{G}, E_{G})$ on a linearly-ordered set of vertices $V_{G}$ and a finite graph $H=(V_{H}, E_{H})$, if every finite subgraph of $G$ has a homomorphism into $H$, then so has $G$'} $\not\rightarrow AC^{\omega}_{fin}$, $\not\rightarrow LOKW_{4}^{-}$, and $\not\rightarrow AC^{-}_{n}$ in ZFA.  
    
    \item Fix an arbitrary $2\leq n < \omega$.
    We prove that for every $3\leq k<\omega$, the statement `$\mathcal{P}_{k}$ if the graph $G_{1}$ is on some linearly-orderable set of vertices' $\not\rightarrow AC^{\omega}_{fin}$, $\not\rightarrow LOKW_{4}^{-}$, and $\not\rightarrow AC^{-}_{n}$ in ZFA.  
    
    \item Marshall Hall \cite{Hal1948} proved that if $S$ is a set and $\{S_{i}\}_{i\in I}$ is an indexed family of \textbf{finite} subsets of $S$, then if the following property holds,
    \begin{center}
        (P) for every finite $F\subseteq I$, there is an injective choice function for $\{S_{i}\}_{i\in F}$.
    \end{center}
    then there is an injective choice function for $\{S_{i}\}_{i\in I}$. We abbreviate the above assertion by MHT.
    We recall that BPI implies MHT and MHT implies the Axiom of choice for finite sets ($AC_{fin}$) in ZF (c.f. \cite{HR1998}). Fix an arbitrary $2\leq n < \omega$.  We prove that MHT restricted to a linearly-ordered collection of finite subsets of a set does not imply $AC^{-}_{n}$ in ZFA.
\end{enumerate}

\begin{figure}[!ht]
\centering
\begin{minipage}{\textwidth}
\centering
\begin{tikzpicture}[scale=7]
\draw (-0.6,0.0) node[above] {$CAC^{\aleph_{\alpha}}$};
\draw (-0.6,0.2) node[above] {$DT$};
\draw (-0.6,0.4) node[above] {$CWF$};
\draw (-0.6,0.6) node[above] {$CS$};

\draw (-0.6,1) node[above] {$Statement$ 1};
\draw (-0.6,1.2) node[above] {$Statement$ 2};
\draw (-0.6,1.4) node[above] {$Statement$ 3};
\draw (-0.6,1.6) node[above] {$Statement$ 4};

\draw (0,-0.2) node[above] {$\mathcal{P}_{G,H}$ restricted to finite graph $H$ with 2 vertices, $\mathcal{P}_{3}\not\longrightarrow AC_{3}, AC^{\omega}_{fin}$};

\draw (0.85,0.0) node[above] {There are no amorphous sets};
\draw (0.8,0.4) node[above] {$AC^{-}_{n}$  for every $2\leq n < \omega$};
\draw (0.62,0.8) node[above] {$LOKW^{-}_{4}$};
\draw (0.6,1.2) node[above] {$AC^{\omega}_{fin}$};

\draw[-triangle 60] (-0.5,0.03) -- (0.5,0.03);
\draw (0.33,0.05) -- (0.31,0.0);

\draw[-triangle 60] (-0.5,0.03) -- (0.5,0.43);
\draw (0.33,0.38) -- (0.31,0.33);
\draw (0.33,0.42) -- (0.31,0.37);
\draw (0.33,0.45) -- (0.31,0.4);
\draw (0.33,0.48) -- (0.31,0.43);
\draw (0.33,0.57) -- (0.31,0.52);
\draw (0.33,0.61) -- (0.31,0.56);
\draw (0.33,0.65) -- (0.31,0.60);
\draw (0.33,0.68) -- (0.31,0.63);

\draw[-triangle 60] (-0.5,0.23) -- (0.5,0.43);
\draw (0.33,0.78) -- (0.31,0.73);
\draw (0.33,0.82) -- (0.31,0.77);

\draw[-triangle 60] (-0.5,0.43) -- (0.5,0.43);
\draw (0.33,0.92) -- (0.31,0.87);
\draw (0.33,0.97) -- (0.31,0.92);
\draw (0.33,1.01) -- (0.31,0.96);

\draw (0.33,1.11) -- (0.31,1.06);
\draw (0.33,1.14) -- (0.31,1.09);
\draw (0.33,1.25) -- (0.31,1.2);
\draw (0.33,1.29) -- (0.31,1.24);
\draw (0.33,1.33) -- (0.31,1.28);

\draw[-triangle 60] (-0.5,0.43) -- (0.5,0.83);
\draw[-triangle 60] (-0.5,0.43) -- (0.5,1.23);

\draw[-triangle 60] (-0.5,0.63) -- (0.5,0.43);
\draw[-triangle 60] (-0.5,0.63) -- (0.5,0.83);
\draw[-triangle 60] (-0.5,0.63) -- (0.5,1.23);


\draw[-triangle 60] (-0.45,1.03) -- (0.5,0.43);

\draw[-triangle 60] (-0.45,1.23) -- (0.5,0.43);
\draw[-triangle 60] (-0.45,1.23) -- (0.5,0.83);
\draw[-triangle 60] (-0.45,1.23) -- (0.5,1.23);

\draw[-triangle 60] (-0.45,1.63) -- (0.5,0.43);
\draw[-triangle 60] (-0.45,1.63) -- (0.5,0.83);
\draw[-triangle 60] (-0.45,1.63) -- (0.5,1.23);

\draw[-triangle 60] (-0.45,1.43) -- (0.5,0.43);
\draw[-triangle 60] (-0.45,1.43) -- (0.5,0.83);
\draw[-triangle 60] (-0.45,1.43) -- (0.5,1.23);
\end{tikzpicture}
\end{minipage}
\caption{\em In the above figure, we sketch the results of this note in ZFA. For each regular $\aleph_{\alpha}$, we denote by $CAC^{\aleph_{\alpha}}$ the
statement `if in a partially ordered set, all chains are finite and all antichains have size $\aleph_{\alpha}$, then the
set has size $\aleph_{\alpha}$'.
We use \textbf{Statement 4} to denote 
`$\mathcal{P}_{k}$ for the graph $G_{1}$ on some linearly-orderable set of vertices' for a natural number $k\geq 3$. We use \textbf{Statement 3} to denote `For every finite field $\mathcal{F}=\langle F,...\rangle$, for every nontrivial linearly-ordered vector space $V$ over $\mathcal{F}$, there exists a non-zero linear functional $f: V\rightarrow F$'. 
We use \textbf{Statement 2} to denote `For an infinite  graph $G=(V_{G}, E_{G})$ on a linearly-ordered set of vertices $V_{G}$ and a finite graph $H=(V_{H}, E_{H})$, if every finite subgraph of $G$ has a homomorphism into $H$, then so has $G$'. We use \textbf{Statement 1} to denote Marshall Hall's theorem for linearly-ordered collection of finite subsets of a set.}
\end{figure}
\section{A List of forms and definitions}
\begin{enumerate}
    \item The \textbf{{\em Axiom of Choice}, AC (Form 1 in \cite{HR1998})}: Every family of nonempty sets has a choice function.
    \item  The \textbf{{\em Axiom of Choice for Finite Sets}, $AC_{ﬁn}$ (Form 62 in \cite{HR1998})}: Every family of non-empty ﬁnite sets has a choice function.
    \item \textbf{$AC_{2}$ (Form 88 in \cite{HR1998})}:  Every family of pairs has a choice function. 
    
    \item \textbf{$AC_{n}$ for each $n \in \omega, n \geq 2$ (Form 61 in \cite{HR1998})}: Every family of $n$ element sets has a choice function. We denote by $AC_{n}^{-}$ the statement {\em `Every infinite family of $n$-element sets has a partial choice function' (\textbf{Form 342(n)} in \cite{HR1998}, denoted by $C_{n}^{-}$ in \textbf{Definition 1 (2)} of \cite{HT2019}).} We denote by $LOKW_{n}^{-}$ the statement {\em `Every infinite linearly orderable family $\mathcal{A}$ of $n$-element sets has a partial Kinna--Wegner selection function' (c.f. \textbf{Definition 1 (2)} of \cite{HT2019}).} 
    
    \item \textbf{$AC^{\omega}_{ﬁn}$ (Form 10 in \cite{HR1998})}: Every countably inﬁnite family of non-empty ﬁnite sets has a choice function. We denote by $PAC^{\omega}_{fin}$ the statement {\em `Every countably inﬁnite family of non-empty ﬁnite sets has a partial choice function'}.
    
    \item The \textbf{{\em Principle of Dependent Choice}, DC (Form 43 in \cite{HR1998})}: If $S$ is a relation on a non-empty set $A$ and $(\forall x\in A)(\exists y\in A)(xSy)$ then there is a sequence $a_{0}, a_{1},...$ of elements of $A$ such that $(\forall n\in \omega)(a(n)S a(n+1))$. 
    
    \item \textbf{LW (Form 90 in \cite{HR1998})}: Every linearly-ordered set can be well-ordered.
    \item \textbf{UT(WO, WO, WO) (Form 231 in \cite{HR1998})}: The union of a well-ordered collection of well-orderable sets is well-orderable.
    
    \item \textbf{UT(WO, fin, WO) (Form 10N in \cite{HR1998})}:  The union of a well-orderable family of finite sets is well-orderable. 
    
    \item \textbf{$(\forall\alpha)UT(\aleph_{\alpha},\aleph_{\alpha},\aleph_{\alpha})$ (Form 23 in \cite{HR1998})}: For every ordinal $\alpha$, if $A$ and every member of $A$ has cardinality $\aleph_{\alpha}$, then $\vert \cup A\vert=\aleph_{\alpha}$. 
    
    \item \textbf{UT($\aleph_{0}$, fin, $\aleph_{0}$) (Form 10A in \cite{HR1998})}: The union of a denumerable collection of finite sets is countable.
    
    \item The \textbf{{\em Boolean Prime Ideal Theorem}, BPI (Form 14 in \cite{HR1998})}: Every Boolean algebra has a prime ideal. We recall the following equivalent formulations of BPI.

    \begin{itemize}
        \item \textbf{(Form 14AW in \cite{HR1998})}: The Compactness theorem for propositional logic.
        
        \item The \textbf{{\em Ultrafilter lemma}, UL (Form 14A in \cite{HR1998})}: Every proper filter over a set $S$ in $\mathcal{P}(S)$ can be extended to an ultrafilter.
        
        \item The \textbf{{\em n-coloring theorem for $n\geq 3$}, (Form 14G($n$)($n\in\omega, n\geq 3$) in \cite{HR1998})}: For every graph $G=(V,E)$ if every finite subgraph of $G$ is $n$-colorable then G is $n$-colorable. This is De Bruijn--Erd\H{o}s theorem for $n\geq 3$ colorings.
\end{itemize} 
    \item The \textbf{{\em Principle of consistent choice}, PCC (Form 14AH in \cite{HR1998})}: Let $\mathcal{A}=\{A_{i}\}_{i\in I}$ be a family of finite sets and $\mathcal{R}$ is a symmetric binary relation on $\cup_{i\in I} A_{i}$. Suppose that for every finite $W\subset I$, there is an $\mathcal{R}$-consistent choice function for $\{A_{i}\}_{i\in W}$, then there is an $\mathcal{R}$-consistent choice function for $\{A_{i}\}_{i\in I}$.
    
    We note that Form 14AH in \cite{HR1998} is different than the above formulation. \L{}o\'{s}/Ryll-Nardzewski \cite{LN1951} introduced both the formulations where it was noted that they are equivalent. Let $n\in \omega\backslash \{0,1\}$.  
    We recall the notation $F_{n}$ introduced by Cowen in \cite{Cow1977}, which is PCC restricted to families $\mathcal{A}=\{A_{i}:i\in I\}$, where $\vert A_{i}\vert\leq n$ for all $i\in I$. 
    
    \item \textbf{{\em Marshall Hall's theorem}, MHT (Form 107 in \cite{HR1998})}: If $S$ is a set and $\{S_{i}\}_{i\in I}$ is an indexed family of \textbf{finite} subsets of $S$, then if the following property holds,
    \begin{center}
        (P) for every finite $F\subseteq I$, there is an injective choice function for $\{S_{i}\}_{i\in F}$.
    \end{center}
    then there is an injective choice function for $\{S_{i}\}_{i\in I}$. 
    
    \textbf{Philip Hall's theorem} states that the property (P) is equivalent to the Hall's condition which states that `$\forall F\in [I]^{<\omega}, \vert \cup_{i\in F} S_{i}\vert\geq \vert F \vert$'. We recall that Philip Hall's theorem or finite Hall's theorem can be proved in ZF without using any choice principles.
    
    \item \textbf{A weaker form of \L{}o\'{s}'s lemma, LT (Form 253 in \cite{HR1998})}: If $\mathcal{A}=\langle A, \mathcal{R}^{\mathcal{A}}\rangle$ is a non-trivial relational $\mathcal{L}$-structure over some language $\mathcal{L}$, and $\mathcal{U}$ be an ultrafilter on a non-empty set $I$, then the ultrapower $\mathcal{A}^{I}/\mathcal{U}$ and $\mathcal{A}$ are elementarily equivalent. 
    
    \item \textbf{MCC} (c.f. \textbf{Definition 5} and \textbf{Definition 6} of \cite{Tac2017}): Every topological space with the minimal cover property is compact.
    
    \item \textbf{Bounded and unbounded amorphous sets}: An inﬁnite set $X$ is called {\em amorphous} if $X$ cannot be written as a disjoint union of two inﬁnite subsets. There are two types of amorphous sets, namely bounded amorphous sets and unbounded amorphous sets. Let $\mathcal{U}$ be a finitary partition of an amorphous set $X$. Then all but finitely many elements of $\mathcal{U}$ have the same cardinality, say $n(\mathcal{U})$.   
    Let $\Pi(X)$ be the set of all finitary partitions of $X$ and $n(X)=sup\{n(\mathcal{U}): \mathcal{U} \in \Pi(X)\}$. If $n(X)$ is finite, then $X$ is called {\em bounded amorphous} and if $n(X)$ is infinite, then $X$ is called {\em unbounded amorphous}. We recall \textbf{Theorem 6 of \cite{Tac2017}} which states that MCC $\rightarrow$ ``there are no bounded amorphous sets''.
    
    \item \textbf{(Form 64 in \cite{HR1998})}: There are no amorphous sets.
    
    \item \textbf{Martin's Axiom (c.f. \cite{Tac2016b})}: If $\kappa$ is a well-ordered cardinal, we denote by $MA(\kappa)$ the principle `If $(P,<)$ is a nonempty, c.c.c. quasi order and $\mathcal{D}$ is a family of $\leq\kappa$ dense sets in $P$, then there is a filter $\mathcal{F}$ of $P$ such that $\mathcal{F}\cap D\not=\emptyset$ for all $D\in \mathcal{D}$'. We recall from \textbf{Remark 2.7} of \cite{Tac2016b} that $AC_{fin}^{\omega} + MA(\aleph_{0})\rightarrow$ `for every infinite set X, $2^{X}$ is Baire' and `for every infinite set X, $2^{X}$ is Baire' $\rightarrow$ `there are no amorphous sets'.
    
    \item \textbf{{\em Dilworth's decomposition theorem} for infinite p.o.sets of finite width, DT (c.f. \cite{Tac2019})}:
    If $\mathbb{P}$ is an arbitrary p.o.set, and $k$ is a natural number such that $\mathbb{P}$ has no antichains of size $k + 1$ while at least one $k$-element subset of $\mathbb{P}$ is an antichain, then $\mathbb{P}$ can be partitioned into $k$ chains. We abbreviate the above formulation as DT. We recall \textbf{Theorem 3.1(i)} of \cite{Tac2019}, which states that DT for well-ordered infinite p.o.sets with finite width is provable in ZF.
    
    \item The \textbf{{\em Chain/Antichain Principle}, CAC (Form 217 in \cite{HR1998})}: Every infinite p.o.set has an infinite chain or an infinite antichain. We recall that $CAC$ implies $AC^{\omega}_{fin}$ from \textbf{Lemma 4.4 } of \cite{Tac2019a}.
    
    \item \textbf{CS} (c.f. \cite{THS2016}): Every partially ordered set without a maximal element has two disjoint cofinal subsets.
    \item \textbf{CWF} (c.f. \textbf{Definition 6 (11)} of \cite{Tac2017}): Every partially ordered set has a cofinal well-founded subset.
    
    \item \textbf{Chromatic number of the product of graphs:} We recall a few basic terminologies of graphs. An {\em independent set} is a set of vertices in a graph, no two of which are connected by an edge. A {\em good coloring} of a graph $G=(V_{G},E_{G})$ with a color set $C$ is a mapping $f:V_{G}\rightarrow C$ such that for every  $\{x,y\}\in E_{G}$, $f(x)\not= f(y)$. The {\em chromatic number} $\chi(E_{G})$ of a graph $G=(V_{G},E_{G})$ is the smallest cardinal $\kappa$ such that the graph $G$ can be colored by $\kappa$ colors.
We define the cartesian product of two graphs $G_{1}=(V_{G_{1}},E_{G_{1}})$ and $G_{2}=(V_{G_{2}},E_{G_{2}})$ as the graph 
$G_{1} \times G_{2}=(V_{G_{1} \times G_{2}}, E_{G_{1} \times G_{2}})=(V_{G_{1}}\times V_{G_{2}}, \{\{(x_{0}, x_{1}),(y_{0}, y_{1})\}:\{x_{0}, y_{0}\} \in E_{G_{1}},\{x_{1}, y_{1}\} \in E_{G_{2}}\})$
where $V_{G_{1}}\times V_{G_{2}}$ is the cartesian product of the vertex sets $V_{G_{1}}$ and $V_{G_{2}}$. 
It can be seen that $\chi(E_{G_{1} \times G_{2}}) \leq min (\chi(E_{G_{1}}), \chi(E_{G_{2}}))$. In particular, if $\chi(E_{G_{1}})=k<\omega$ then $\chi(E_{G_{1}\times G_{2}})=k$, since if $f:V_{G_{1}}\rightarrow \{1,...,k\}$ is a good $k$-coloring of $G_{1}$, then $F(\langle x,y\rangle)=f(x)$ is a good $k$-coloring of $G_{1}\times G_{2}$. In \textbf{Theorem 2} of \cite{Haj1985}, Hajnal proved that if $\chi(E_{G_{1}})$ is finite (say $k<\omega$), and $\chi(E_{G_{2}})$ is infinite, then $\chi(E_{G_{1}\times G_{2}})$ is $k$. For a natural number $k<\omega$, we denote by $\mathcal{P}_{k}$ the following statement. 
    \begin{center}
    {\em `$\chi(E_{G_{1}})=k<\omega$ and $\chi(E_{G_{2}})\geq\omega$ implies $\chi(E_{G_{1}\times G_{2}})=k$.'}
    \end{center}
\end{enumerate}

\subsection{Permutation models.} Let $M$ be a model of $ZFA+AC$ where $A$ is a set of atoms or ur-elements. Each permutation $\pi:A\rightarrow A$ extends uniquely to a permutation of $\pi':M\rightarrow M$ by $\epsilon$-induction. Let $\mathcal{G}$ be a group of permutations of $A$ and $\mathcal{F}$ be a normal filter of subgroups of $\mathcal{G}$. For $x\in M$, we denote the symmetric group with respect to $\mathcal {G}$ by $sym_{\mathcal {G}}(x) =\{g\in \mathcal {G} \mid g(x) = x\}$. We say $x$ is {\em $\mathcal{F}$-symmetric} if $sym_{\mathcal{G}}(x)\in\mathcal{F}$ and $x$ is {\em hereditarily $\mathcal{F}$-symmetric} if $x$ is $\mathcal{F}$-symmetric and each element of transitive closure of $x$ is symmetric. We define the permutation model $\mathcal{N}$ with respect to $\mathcal{G}$ and $\mathcal{F}$, to be the class of all hereditarily $\mathcal{F}$-symmetric sets. It is well-known that $\mathcal{N}$ is a model of $ZFA$ (see \textbf{Theorem 4.1} of \cite{Jec1973}). 
If $\mathcal{I}\subseteq\mathcal{P}(A)$ is a normal ideal, then the set $\{$fix$_{\mathcal{G}}E: E\in\mathcal{I}\}$ generates a normal filter over $\mathcal{G}$. Let $\mathcal{I}$ be a normal ideal generating a normal filter $\mathcal{F}_{\mathcal{I}}$ over $\mathcal G$. Let $\mathcal{N}$ be the permutation model determined by $\mathcal{M}, \mathcal{G},$ and $\mathcal{F}_{\mathcal{I}}$. We say $E\in \mathcal{I}$ supports a set $\sigma\in \mathcal{N}$ if fix$_{\mathcal{G}}E\subseteq sym_{\mathcal{G}} (\sigma$). 
\section{Well-ordered structures in ZF}  \subsection{Applications of Loeb's theorem} We recall the following fact from \cite{Ker2000}.

\begin{lem}
{\em (ZF). If $X$ is well-orderable, then $2^{X}$ is compact.}
\end{lem}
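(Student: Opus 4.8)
The plan is to apply Loeb's theorem (Theorem 1 of \cite{Loeb1965}, stated above) to the constant family $X_{i} = \{0,1\}$ carrying the discrete topology, indexed by $i \in I = X$. Recall that $2^{X}$ is by definition the product space $\prod_{x \in X}\{0,1\}$, so this is precisely the product to which Loeb's theorem speaks. If $X$ is finite the claim is immediate, since $2^{X}$ is then a finite discrete space, hence compact in ZF with no appeal to choice; so I would assume $X$ infinite from here on.

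For infinite $X$ I would verify the three hypotheses of Loeb's theorem in turn. First, since $X$ is well-orderable, fix a well-ordering of the index set $I = X$; this supplies the well-ordering hypothesis. Second, each factor $\{0,1\}$ is finite and therefore compact. Third, and this is the only point requiring care, one needs a choice function $F$ on the collection $\mathcal{C}$ of nonempty closed subsets $C$ with $C \subseteq X_{i}$ for some $i$. Here I would use the fact that all factors coincide: each $X_{i}$ is the single fixed space $\{0,1\}$, in which every subset is closed, so $\mathcal{C} = \{\{0\}, \{1\}, \{0,1\}\}$ is a three-element set. A choice function is then definable outright in ZF, for instance $F(C) = \min C$ under the order $0 < 1$. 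With all hypotheses met, Loeb's theorem delivers compactness of $\prod_{x \in X}\{0,1\} = 2^{X}$.

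I expect the whole difficulty to be concentrated in recognizing why this survives the absence of AC. In general, products of compact spaces need not be compact without some choice, and even $2^{X}$ for arbitrary $X$ is tied to the Boolean prime ideal theorem. What rescues the well-orderable case is exactly that the factors are a single finite space: the family of closed sets appearing in Loeb's hypothesis is finite, so the choice function it demands is free. Thus the main obstacle is not a computation but the observation that Loeb's closed-set choice condition collapses to a trivial finite choice; the remainder is bookkeeping.
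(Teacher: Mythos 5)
Your proof is correct, and it is precisely the argument the paper alludes to: the paper itself gives no proof of this lemma, only a citation to Keremedis \cite{Ker2000} followed by a remark that it ``can also be proved applying Theorem 1 of \cite{Loeb1965}'', which is exactly the route you take. Your key observation --- that the collection of nonempty closed subsets of the factors collapses to the finite set $\{\{0\},\{1\},\{0,1\}\}$, so Loeb's choice-function hypothesis is free in ZF --- is the right one and correctly discharges the only nontrivial hypothesis.
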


\textbf{Remark.} We can also prove \textbf{Lemma 3.1} applying \textbf{Theorem 1} of \cite{Loeb1965}.
\begin{obs}
$UT(WO, fin, WO)$ implies {\em Marshall Hall's theorem for any well-ordered collection of finite subsets of a set}. 
\end{obs}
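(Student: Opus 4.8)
The plan is to combine $UT(WO,fin,WO)$ with a Rado-type compactness argument routed through Loeb's theorem (\textbf{Theorem 1} of \cite{Loeb1965}), exactly in the spirit of the well-ordered applications developed above. Let $\{S_{i}\}_{i\in I}$ be a well-ordered family of finite subsets of a set $S$ satisfying property (P), and assume without loss of generality that $I$ is an ordinal. Note first that (P) applied to singletons $F=\{i\}$ forces each $S_{i}$ to be nonempty. The opening move is to observe that $S^{*}=\bigcup_{i\in I}S_{i}$ is the union of a well-orderable family of finite sets, hence well-orderable by $UT(WO,fin,WO)$; I fix a well-ordering $\prec$ of $S^{*}$.

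The second step is to topologize. I equip each finite $S_{i}$ with the discrete topology and form the product space $X=\prod_{i\in I}S_{i}$. Each $S_{i}$ is compact, $I$ carries a well-ordering, and a choice function on the collection $\{C: C$ is closed, $C\neq\emptyset$, $C\subseteq S_{i}$ for some $i\}$ is available by sending each such $C$ (a nonempty subset of $S^{*}$) to its $\prec$-least element. Thus the hypotheses of Loeb's theorem are met and $X$ is compact in the product topology; $X$ is also nonempty, since $i\mapsto\min_{\prec}S_{i}$ is a choice function.

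The third step is the finite intersection argument. For each pair $i\neq j$ in $I$ put $F_{ij}=\{f\in X: f(i)\neq f(j)\}$, which is clopen since membership depends only on the coordinates $i$ and $j$. An injective choice function for $\{S_{i}\}_{i\in I}$ is exactly a point of $\bigcap_{i\neq j}F_{ij}$, so by compactness of $X$ it suffices to verify that this family of closed sets has the finite intersection property. Given finitely many pairs, let $J\subseteq I$ be the finite set of indices occurring; property (P) supplies an injective choice function $g$ on $\{S_{i}\}_{i\in J}$, and extending it by $f(i)=g(i)$ for $i\in J$ and $f(i)=\min_{\prec}S_{i}$ for $i\notin J$ gives a point of $X$ lying in every one of the chosen $F_{ij}$ (using injectivity of $g$ on $J$). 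This yields the finite intersection property, and compactness then produces the desired injective selector.

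The step I expect to be the genuine obstacle—and the only place where the hypothesis is really used—is the verification that $X$ is compact in ZF: it is precisely the well-orderability of $S^{*}$ granted by $UT(WO,fin,WO)$ that provides the choice function on closed subsets demanded by Loeb's theorem (alternatively one may derive the compactness of $X$ from the compactness of $2^{S^{*}}$ in \textbf{Lemma 3.1}). Everything else is choice-free: property (P) is literally the existence of finite systems of distinct representatives, and the set of injective selectors is an intersection of clopen sets, so no further appeal to choice is needed once compactness is in hand.
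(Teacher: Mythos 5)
Your proof is correct, and it rests on the same engine as the paper's --- compactness of a product of finite discrete spaces over a well-ordered index, with $UT(WO,fin,WO)$ invoked exactly once to well-order $\bigcup_{i\in I}S_{i}$ --- but the implementation is genuinely different. The paper encodes the system-of-distinct-representatives problem propositionally: it introduces sentence symbols $A'_{i,j}$ for $j\in S_{i}$, writes down clauses forcing each $S_{i}$ to receive exactly one representative and distinct sets to receive distinct representatives, uses $UT(WO,fin,WO)$ to well-order the set $Var$ of these symbols, runs the finite-intersection-property argument inside the compact space $2^{Var}$ (\textbf{Lemma 3.1}), cites Philip Hall's theorem for the nonemptiness of the finite approximations $F_{W}$, and finishes with a decoding claim turning a satisfying valuation into an SDR. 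You instead work directly in $\prod_{i\in I}S_{i}$, obtain its compactness from \textbf{Theorem 1} of \cite{Loeb1965} (the required choice function on nonempty closed subsets of the $S_{i}$ being the $\prec$-least-element map on the well-ordered union), and cut out the injective selectors as $\bigcap_{i\neq j}F_{ij}$ with $F_{ij}$ clopen, using property (P) itself rather than Hall's theorem to witness the finite intersection property. Your route avoids the valuation bookkeeping and the final decoding step; the paper's route has the advantage of being a propositional template that is reused verbatim in the subsequent observations on graph homomorphisms and linear functionals. The two compactness inputs are interchangeable (the paper itself remarks that \textbf{Lemma 3.1} can be derived from Loeb's theorem), and your attention to the degenerate points --- that (P) on singletons forces each $S_{i}\neq\emptyset$, and that $\prod_{i}S_{i}\neq\emptyset$ via $i\mapsto\min_{\prec}S_{i}$ --- is a small but genuine tightening of details the paper leaves implicit.
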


\begin{proof}
Let $S$ be a set and $\{S_{i}\}_{i\in I}$ be a well-ordered indexed family of \textbf{finite} subsets of $S$ such that the following property holds,
    \begin{center}
        (P) for every finite $F\subseteq I$, there is an injective choice function for $\{S_{i}\}_{i\in F}$.
    \end{center}
We work with the propositional language $\mathcal{L}$ with the following sentence symbols. 
\begin{center}
    $A'_{i,j}$ where $j\in S_{i}$ and $i\in I$.
\end{center}
Let $\mathcal{F}$ be the set of all formulae of $\mathcal{L}$ and $\Sigma\subset \mathcal{F}$ be the collection of the following formulae.

\begin{enumerate}
    \item $\neg (A'_{i,m}\land A'_{j,m})$ for $i\not=j$, $m\in S_{i}\cap S_{j}$.
    \item $\neg (A'_{i,j}\land A'_{i,l})$ for any $l\not=j\in S_{i}$ where $i\in I$.
    \item $A'_{i,y_{1}}\lor A'_{i,y_{2}}...\lor A'_{i,y_{k}}$ for each $i\in I$ where $S_{i}=\{y_{1},... y_{k}\}$. 
\end{enumerate}

We enumerate $Var=\{A'_{i,j}:i\in I, j\in S_{i}\}$ since each $S_{i}$ is finite, $I$ is well-orderable and \textbf{$UT(WO,fin,WO)$ is assumed}.
For every $W \in [I]^{<\omega} \backslash \{\emptyset\}$, we let $\Sigma_{W}$ be the subset of $\mathcal{F}$, which is defined as $\Sigma$ except
that the subscripts in the formulae are from the set $W \cup \bigcup_{i\in W} S_{i}$. 
Endow the discrete 2-element space $\{0,1\}$ with the discrete topology and consider the product space $2^{Var}$ with the product topology.
Let $F_{W} = \{f \in 2^{Var} : \forall\phi\in \Sigma_{W}(f'(\phi)= 1)\}$ where for $f \in 2^{Var}$, the element $f'$ of $2^{\mathcal{F}}$ denotes the valuation mapping determined by $f$.
By \textbf{Philip Hall's theorem} which is provable in ZF without using any choice principles, each $F_{W}$ is non-empty and the family $\mathcal{X}=\{F_{W}: W\in [I]^{<\omega}\backslash\{\emptyset\}\}$ has the finite intersection property. Also for each $W\in [I]^{<\omega}\backslash \{\emptyset\}$, $F_{W}$ is closed in the topological space $2^{Var}$. By \textbf{Lemma 3.1} since $2^{Var}$ is compact in ZF, $\cap \mathcal{X}$ is non-empty.
Pick an $f\in \cap \mathcal{X}$ and let $f'\in 2^{\mathcal{F}}$ be the unique valuation mapping that extends $f$. Clearly, $f'(\phi)=1$ for all $\phi\in \Sigma$. Consequently, we obtain an injective choice function for $\{S_{i}\}_{i\in I}$ by the following claim.

\begin{claim}
{\em If $v$ is a truth assignment which satisfies $\Sigma$, then we can define a system of distinct representatives by

\begin{center}
    $y\in S_{i}$ if and only if $v(A'_{i,y})=T$.
\end{center}
}
\end{claim}

\begin{proof}
By (2) and (3) for each $i\in I$, each collection $S_{i}$ gets assigned a unique representative. By (1), distinct sets $S_{i}$ and $S_{j}$ gets assigned distinct representatives.
\end{proof}
\end{proof}

Banaschewski \cite{Bana1992} proved the uniqueness of the algebraic closure of an arbitrary field applying BPI.\footnote{c.f. the last paragraph of page 384 and page 385 of \cite{Bana1992}.} 

\begin{obs}
$UT(WO, fin, WO)$ implies `{\em If a field $\mathcal{K}$ has an algebraic closure, and the ring of polynomials $\mathcal{K}[x]$ is well-orderable, then the algebraic closure is unique}'.
\end{obs}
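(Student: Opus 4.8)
The plan is to mirror the compactness-and-Tychonoff argument already used in Observation 3.2, since the uniqueness of algebraic closures is, like a system of distinct representatives, a statement that holds in $\mathrm{ZF}+\mathrm{BPI}$ and whose only use of choice is an appeal to the compactness of a product $2^{X}$. Concretely, suppose $\mathcal{K}$ is a field with two algebraic closures $\overline{\mathcal{K}}_1$ and $\overline{\mathcal{K}}_2$; we want a $\mathcal{K}$-isomorphism between them. Banaschewski's proof obtains such an isomorphism from a prime ideal / ultrafilter, and the standard way to extract it is to set up a propositional theory $\Sigma$ whose sentence symbols encode ``the element $a\in\overline{\mathcal{K}}_1$ is sent to the element $b\in\overline{\mathcal{K}}_2$'' and whose axioms assert that the intended assignment is a well-defined field isomorphism fixing $\mathcal{K}$. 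The crucial point, exactly as in Observation 3.2, is that a model of $\Sigma$ corresponds to the desired isomorphism, and that $\Sigma$ is finitely satisfiable because any finite fragment only involves finitely many elements, which generate a finite (hence finitely handled) subextension over which the isomorphism-extension theorem for algebraic closures is provable in $\mathrm{ZF}$.

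First I would fix the propositional language $\mathcal{L}$ with sentence symbols $B_{a,b}$ indexed by pairs $(a,b)\in \overline{\mathcal{K}}_1\times\overline{\mathcal{K}}_2$, so that the variable set is $Var=\overline{\mathcal{K}}_1\times\overline{\mathcal{K}}_2$. Next I would write down the axiom set $\Sigma\subseteq\mathcal{F}$ expressing: (i) functionality and injectivity (for each $a$ at most one $b$ with $B_{a,b}$, and dually); (ii) totality (for each $a$ some $b$, and dually) --- but here one must be careful, since ``some $b$'' is an infinite disjunction unless one restricts to the finitely many conjugates of $a$ over $\mathcal{K}$, which is the natural finite disjunction to use; (iii) compatibility with the field operations, $B_{a,b}\wedge B_{a',b'}\to B_{a+a',b+b'}$ and $B_{a,b}\wedge B_{a',b'}\to B_{a\cdot a',b\cdot b'}$, together with clauses recording the images of $0,1$ and fixing each element of $\mathcal{K}$. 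The analogue of Claim 3.3 is then the verification that any truth assignment satisfying $\Sigma$ defines a $\mathcal{K}$-isomorphism $\overline{\mathcal{K}}_1\to\overline{\mathcal{K}}_2$; this is a routine bookkeeping argument analogous to the proof of Claim 3.3.

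The hypothesis that $\mathcal{K}[x]$ is well-orderable is what lets me well-order $\overline{\mathcal{K}}_1$ and $\overline{\mathcal{K}}_2$: each algebraic closure is, up to the choice of conjugates, coded by the well-orderable set of monic irreducible polynomials together with finite data, so $Var$ is well-orderable, and hence by Lemma 3.1 the space $2^{Var}$ is compact in $\mathrm{ZF}$. I would then, exactly as in Observation 3.2, form the closed sets $F_{W}=\{f\in 2^{Var}:\forall\phi\in\Sigma_W\,(f'(\phi)=1)\}$ for finite fragments $\Sigma_W$, show each $F_W\neq\emptyset$ by the $\mathrm{ZF}$-provable finite case (a finite subset of the axioms involves only finitely many elements $a_1,\dots,a_n$, which lie in a finitely generated --- hence finite-degree --- normal subextension of $\mathcal{K}$ inside $\overline{\mathcal{K}}_1$, on which the isomorphism-extension theorem produces the required partial assignment without choice), observe that $\{F_W\}$ has the finite intersection property, and conclude $\bigcap_W F_W\neq\emptyset$ by compactness. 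Any $f$ in this intersection yields a truth assignment satisfying $\Sigma$, and the claim delivers the isomorphism.

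The main obstacle I anticipate is the finite-satisfiability step, i.e.\ showing each $F_W$ is nonempty in $\mathrm{ZF}$ alone: I must pin down precisely which finite piece of field theory is needed so that it is genuinely choice-free. The delicate issue is that ``extending a partial isomorphism on finitely many algebraic elements'' already requires knowing the conjugates match up, and I must ensure the totality axioms in (ii) are phrased as finite disjunctions over the (finitely many) roots of each element's minimal polynomial over $\mathcal{K}$, so that the well-known $\mathrm{ZF}$ extension theorem for finite separable-or-inseparable subextensions applies. Once $\Sigma$ is written so that every finite fragment refers only to a finite-degree subextension, the nonemptiness of $F_W$ reduces to a concrete finite algebra computation, and the rest is the Tychonoff-via-Lemma-3.1 machinery already established.
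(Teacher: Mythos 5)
Your proof is correct in outline but takes a genuinely different route from the paper's. The paper does not propositionalize at all: for each $u\in\mathcal{K}[x]$ it takes the nonempty \emph{finite} set $\mathcal{H}_{u}$ of $\mathcal{K}$-isomorphisms between the splitting fields $\mathcal{E}_{u}\subseteq\mathcal{E}$ and $\mathcal{F}_{u}\subseteq\mathcal{F}$ of $u$, forms the product $\mathcal{H}=\prod_{u\in\mathcal{K}[x]}\mathcal{H}_{u}$ with each factor discrete, and intersects the closed sets $H_{v,w}=\{(h_{u})\in\mathcal{H}:h_{v}=h_{w}\restriction\mathcal{E}_{v}\}$ for $v\mid w$; compactness of $\mathcal{H}$ comes from Loeb's theorem, whose hypothesis (a choice function on nonempty closed subsets of the factors) is exactly what $UT(WO,fin,WO)$ supplies by making $\bigcup_{u}\mathcal{H}_{u}$ well-orderable, and any point of $\bigcap_{v\mid w}H_{v,w}$ is a coherent family of splitting-field isomorphisms that glues to the desired $\mathcal{K}$-isomorphism $\mathcal{E}\rightarrow\mathcal{F}$. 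Your version instead encodes the graph of the isomorphism by sentence symbols $B_{a,b}$ and reruns the $2^{Var}$ machinery of Observation 3.2; this buys uniformity with the other observations of Section 3, but costs extra bookkeeping that the paper's formulation gets for free from ``$\mathcal{H}_{u}$ is finite and nonempty'': you must phrase totality and surjectivity as finite disjunctions over conjugates, add operation-compatibility clauses, and establish finite satisfiability via the choice-free isomorphism-extension theorem for finite normal subextensions. Both arguments use $UT(WO,fin,WO)$ at the analogous point; in your write-up you should make this explicit by observing that each $\overline{\mathcal{K}}_{i}$ is the union, indexed by the well-orderable set $\mathcal{K}[x]$, of the finite root sets of its elements' minimal polynomials --- a well-orderable union of finite sets --- which is what makes $Var$ well-orderable and lets Lemma 3.1 apply.
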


\begin{proof}
Let $\mathcal{K}$ be a field, and suppose $\mathcal{E}$ and $\mathcal{F}$ be two algebraic closures of $\mathcal{K}$. We prove that there is an isomorphisn from $\mathcal{E}$ onto $\mathcal{F}$ which fix $\mathcal{K}$ pointwise. Let $\mathcal{E}_{u}$ and $\mathcal{F}_{u}$ be the splitting fields of $u\in \mathcal{K}[x]$ inside $\mathcal{E}$ and $\mathcal{F}$ respectively. Let $\mathcal{H}_{u}$ be the set of all isomorphisms from $\mathcal{E}_{u}$ onto $\mathcal{F}_{u}$ which fix $\mathcal{K}$. Clearly, $\mathcal {H}_{u}$ is a non-empty, finite set. Also, we can see that $\cup_{u} \mathcal{E}_{u}=\mathcal{E}$ and $\cup_{u} \mathcal{F}_{u}=\mathcal{F}$.
Let $\mathcal{H}=\Pi_{u\in \mathcal{K}[x]} \mathcal{H}_{u}$, and if $v\vert w$ define $H_{v,w}=\{(h_{u})\in \mathcal{H}: h_{v}=h_{w}\restriction E_{v}\}$. Clearly, $H_{v,w}$ has finite intersection property and they are closed in the product topology of $\mathcal{H}$, where each $\mathcal{H}_{u}$ is discrete. Since \textbf{$\mathcal{K}[x]$ is well-orderable} as assumed and  
for each $u\in\mathcal{K}[x]$, $\mathcal{H}_{u}$ is finite, we have that $\cup_{u\in\mathcal{K}[x]} \mathcal{H}_{u}$ is well-orderable by $UT(WO, fin, WO)$. By \textbf{Theorem 1} of \cite{Loeb1965}, $\mathcal{H}$ is compact. Consequently,  $\bigcap_{v\vert w}\mathcal{H}_{v,w}\not=\emptyset$ and each $(h_{u})$ in this intersection determines a unique embedding $h:\cup_{u} \mathcal{E}_{u}\rightarrow \cup_{u}\mathcal{F}_{u}$ which is onto and fixes $\mathcal{K}$.
\end{proof}


\begin{obs}
The statement `{\em For an infinite graph $G=(V_{G}, E_{G})$ on a well-ordered set of vertices $V_{G}$ and a finite graph $H=(V_{H}, E_{H})$, if every finite subgraph of $G$ has a homomorphism into $H$, then so has $G$}' is provable in ZF.
\end{obs}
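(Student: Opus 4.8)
The plan is to mimic the compactness-style argument that appears in Observation 3.2 and Observation 3.4, replacing the injective-choice propositional theory by one that encodes graph homomorphisms. Let $H=(V_H,E_H)$ be finite with, say, $V_H=\{c_1,\dots,c_m\}$. Since $V_G$ is well-ordered, I can enumerate a propositional variable $A'_{v,c}$ for each vertex $v\in V_G$ and each color $c\in V_H$, the intended meaning being ``$v$ is mapped to $c$''. The set $\mathrm{Var}=\{A'_{v,c}:v\in V_G,\ c\in V_H\}$ is then a union, indexed by the well-ordered set $V_G$, of finite sets (each of size $m=|V_H|$), so $\mathrm{Var}$ is well-orderable directly in ZF here (no $UT(WO,fin,WO)$ is needed, since the index set is well-ordered and each fiber is finite of fixed size, so the union is well-orderable by a canonical lexicographic ordering). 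This is exactly why the statement is provable in ZF rather than merely in $\mathrm{ZF}+UT(WO,fin,WO)$.

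Next I would write down the theory $\Sigma\subseteq\mathcal{F}$ (the formulae over $\mathrm{Var}$) consisting of: (1) for each $v\in V_G$, the clause $A'_{v,c_1}\lor\cdots\lor A'_{v,c_m}$ asserting that $v$ receives at least one color; (2) for each $v$ and each pair $c\neq c'$, the clause $\neg(A'_{v,c}\land A'_{v,c'})$ asserting uniqueness of the color; and (3) for each edge $\{u,v\}\in E_G$ and each non-edge or equal pair $\{c,c'\}\notin E_H$ (including $c=c'$), the clause $\neg(A'_{u,c}\land A'_{v,c'})$ forbidding an edge of $G$ to be mapped to a non-edge of $H$. A truth assignment satisfying $\Sigma$ then codes exactly a homomorphism $G\to H$: (1) and (2) make the map well-defined and total, and (3) guarantees $\{u,v\}\in E_G\Rightarrow\{f(u),f(v)\}\in E_H$.

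For each finite $W\in[V_G]^{<\omega}\setminus\{\emptyset\}$ let $\Sigma_W$ be the analogous finite subtheory using only variables indexed by vertices in $W$, and set $F_W=\{f\in 2^{\mathrm{Var}}:f'(\phi)=1\text{ for all }\phi\in\Sigma_W\}$, where $f'$ is the valuation induced by $f$. The hypothesis that every finite subgraph of $G$ has a homomorphism into $H$ gives that each $F_W$ is nonempty (a homomorphism of the subgraph on $W$ yields a partial assignment, which I extend arbitrarily off $W$, say by sending unconstrained vertices to $c_1$); moreover the family $\{F_W\}$ is closed under finite unions of indices in the sense that $F_{W_1}\cap\cdots\cap F_{W_k}\supseteq F_{W_1\cup\cdots\cup W_k}\neq\emptyset$, so it has the finite intersection property, and each $F_W$ is closed in $2^{\mathrm{Var}}$. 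Since $\mathrm{Var}$ is well-orderable, Lemma 3.1 applies and $2^{\mathrm{Var}}$ is compact, whence $\bigcap_W F_W\neq\emptyset$. Any $f$ in this intersection satisfies all of $\Sigma$ and thus yields the desired homomorphism $G\to H$.

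The one genuinely nontrivial point is verifying that each $F_W$ is nonempty, i.e.\ that a homomorphism from the finite subgraph induced on $W$ (as guaranteed by the hypothesis) really does satisfy $\Sigma_W$ after extending off $W$; this is where one must be careful that clause~(3) only constrains pairs $\{u,v\}$ that are edges of $G$, so that vertices placed at $c_1$ by default create no violation among themselves unless they are adjacent, and adjacency is already handled by the homomorphism on $W$. Everything else — well-orderability of $\mathrm{Var}$, closedness of the $F_W$, and the finite intersection property — is routine and parallels Observation 3.2 verbatim, so I would not belabor it.
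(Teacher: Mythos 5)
Your proposal is correct and follows essentially the same route as the paper's own proof: propositional variables $A'_{v,c}$ for $v\in V_{G}$, $c\in V_{H}$, the three clause families (totality, uniqueness, edge-preservation), well-orderability of the variable set directly in ZF, and compactness of $2^{\mathrm{Var}}$ via Lemma 3.1 applied to the closed sets $F_{W}$ with the finite intersection property. Your explicit attention to why each $F_{W}$ is nonempty (extending a finite homomorphism arbitrarily off $W$) is a detail the paper leaves implicit, but the argument is the same.
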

\begin{proof}
Fix a finite graph $H=(V_{H}, E_{H})$ and a graph $G=(V_{G}, E_{G})$ on a well-ordered set of vertices $V_{G}$. We consider $V_{H}=\{v_{1},...v_{k}\}$ for some $k<\omega$. We work with the propositional language $\mathcal{L}$ with the following sentence symbols. 
\begin{center}
    $A'_{x_{i},v_{j}}$ where $v_{j}\in V_{H}$ and $x_{i}\in V_{G}$.
\end{center}
Let $\mathcal{F}$ be the set of all formulae of $\mathcal{L}$ and $\Sigma\subset \mathcal{F}$ be the collection of the following formulae.

\begin{enumerate}
    \item $A_{x_{i},v_{m}}'\land A_{x_{j},v_{l}}'$ if and only if $\{x_{i},x_{j}\}\in E_{G}$ implies $\{v_{m},v_{l}\}\in E_{H}$.
    \item $\neg (A_{x_{i},v_{j}}'\land A_{x_{i},v_{l}}')$ for any $v_{l},v_{j}\in V_{H}$ such that $v_{l}\not=v_{j}$ and each $x_{i}\in V_{G}$.
    \item $A_{x_{i},v_{1}}'\lor A_{x_{i},v_{2}}'...\lor A_{x_{i},v_{k}}'$ for each $x_{i}\in V_{G}$. 
\end{enumerate}

By our assumption $V_{G}$ is well-orderable and $V_{H}$ is finite. So $V_{H}$ is well-orderable. Consequently, $V_{G}\times V_{H}$ is well-orderable in ZF.
We enumerate $Var=\{A'_{x_{i},v_{j}}:x_{i}\in V_{G}, v_{j}\in V_{H}\}$.
By assumption, for every $s\in[V_{G}]^{<\omega}$ there is a homomorphism $f_{s}:G\restriction s\rightarrow H$ of $G\restriction s$ into $H$. 
Following the methods used in the proof of \textbf{Observation 3.2}, we may obtain a $f'\in 2^{\mathcal{F}}$ such that $f'(\phi)=1$ for all $\phi\in \Sigma$. Consequently, we can obtain a homomorphism $h$ from $G$ to $H$.
\end{proof}

\begin{obs}
The statement `{\em For every finite field $\mathcal{F}=\langle F,...\rangle$, for every nontrivial well-ordered vector space $V$ over $\mathcal{F}$, there exists a non-zero linear functional $f: V\rightarrow F$}' is provable in ZF.
\end{obs}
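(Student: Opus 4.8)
The plan is to adapt the propositional-compactness template of Observation 3.2 (and Observation 3.4), exploiting that the underlying set of $V$ is well-orderable and that $F$ is finite. Write $F=\{c_1,\dots,c_q\}$ and fix a vector $v_0\in V\setminus\{0\}$, which exists because $V$ is nontrivial (concretely, take the $<$-least nonzero vector in the given well-ordering of $V$). I introduce a propositional language $\mathcal{L}$ with sentence symbols $A'_{x,c}$, one for each $x\in V$ and $c\in F$, with intended reading ``$f(x)=c$''. Since $V$ is well-orderable and $F$ is finite, the set $Var=\{A'_{x,c}:x\in V,\ c\in F\}$ is well-orderable in ZF, so $2^{Var}$ is compact by Lemma 3.1; this is exactly the point at which finiteness of $F$ is used.

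Next I would write down the axiom set $\Sigma$ encoding ``$f$ is a nonzero linear functional'': for each $x$ the clauses $\bigvee_{c\in F}A'_{x,c}$ together with $\neg(A'_{x,c}\wedge A'_{x,c'})$ for $c\neq c'$, so that $f$ is a well-defined function $V\to F$; for each pair $x,y$ with sum $z=x+y$ the implications $(A'_{x,a}\wedge A'_{y,b})\to A'_{z,a+b}$, and for each $\alpha\in F$ and each $x$ with $z=\alpha x$ the implications $A'_{x,a}\to A'_{z,\alpha a}$, encoding additivity and homogeneity; and the single clause $\neg A'_{v_0,0}$, forcing $f(v_0)\neq 0$ so that $f$ is not the zero functional. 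For each finite $W\in[V]^{<\omega}\setminus\{\emptyset\}$ I let $\Sigma_W$ be the corresponding finite fragment whose subscripts range over $W\cup\{v_0\}$ and the finite subspace these generate, and set $F_W=\{g\in 2^{Var}: g'\models \Sigma_W\}$, which is closed in $2^{Var}$ (here $g'$ is the valuation determined by $g$).

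The crux is to verify that each $F_W$ is nonempty, from which the finite intersection property of $\mathcal{X}=\{F_W\}$ follows, since $F_{W_1}\cap\dots\cap F_{W_m}\supseteq F_{W_1\cup\dots\cup W_m}$. This reduces to a statement of finite linear algebra provable outright in ZF: the finite set $W\cup\{v_0\}$ generates a finite-dimensional (indeed finite, as $F$ is finite) subspace $U\leq V$, and on $U$ there is a linear functional with value $\neq 0$ at $v_0$ — extend $v_0$ to a basis of $U$ (possible in ZF for a finite space) and take the coordinate functional dual to $v_0$. Reading off the truth values of such a functional gives a point of $F_W$. I expect this verification to be the main obstacle: one must check that the finitely many constraints in any $\Sigma_W$ are genuinely realised by a single honest functional on a finite subspace, and that the bookkeeping of which vectors and sums occur stays inside a finite, ZF-constructible subspace. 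Granting this, compactness of $2^{Var}$ yields $\bigcap\mathcal{X}\neq\emptyset$; any $g$ in the intersection extends to a valuation $g'$ with $g'\models\Sigma$, and the induced map sending $x$ to the unique $c$ with $g'(A'_{x,c})=1$ is the desired nonzero linear functional $f:V\to F$.

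Alternatively, and more directly, one can bypass compactness altogether: a well-orderable vector space has a basis in ZF (well-order $V$ and, by transfinite recursion, retain each vector not lying in the span of the earlier retained ones), and since $V$ is nontrivial this basis is nonempty; sending the $<$-least basis vector to $1$ and every other basis vector to $0$ and extending linearly yields a nonzero functional, using neither choice nor the finiteness of $F$. I would present the compactness argument to stay uniform with Observations 3.2 and 3.4, but I regard the basis argument as the conceptually cleaner route and would record it as a remark.
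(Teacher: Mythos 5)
Your proposal is correct and follows essentially the same route as the paper's proof of this observation: a propositional encoding with symbols $A'_{x,c}$, the clauses for totality, uniqueness, additivity, homogeneity and non-triviality, non-emptiness of each $F_W$ via a linear functional on the finite subspace generated by $W\cup\{v_0\}$ (finite because $F$ is finite), and compactness of $2^{Var}$ from Lemma 3.1; the only cosmetic difference is that you force non-triviality by $\neg A'_{v_0,0}$ where the paper fixes $f(a)=1$. Your closing remark is also sound --- a well-orderable vector space does have a basis in ZF by transfinite recursion, which gives a more direct proof not needing $F$ finite --- but the paper does not take that route.
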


\begin{proof}
We follow the proof of \textbf{Theorem 18 of \cite{HT2013}} and modify it in the context of well-orderable vector space. Fix a finite field $\mathcal{F}=\langle F,...\rangle$ where $F=\{v_{1},...v_{k}\}$ and a nontrivial well-ordered vector space $V$ over $\mathcal{F}$. We work with the propositional language $\mathcal{L}$ with the following sentence symbols. 
\begin{center}
    $A'_{x_{i},v_{j}}$ where $v_{j}\in F$ and $x_{i}\in V$.
\end{center}
Let $\mathcal{F}'$ be the set of all formulae of $\mathcal{L}$ and $\Sigma\subset \mathcal{F}'$ be the collection of the following formulae.

\begin{enumerate}
    \item $A'_{a,1}$.
    \item $A'_{x_{i},v_{j}}\rightarrow A'_{v_{k} x_{i},v_{k}v_{j}}$ for $v_{k}, v_{j}\in F$ and $x_{i}\in V$.
    \item $A'_{x_{i},v_{j}}\land A'_{x_{i'},v_{j'}}\rightarrow A'_{x_{i}+x_{i'}, v_{j}+v_{j'}}$ for $x_{i},x_{i'}\in V$ and $v_{j}, v_{j'}\in F$.
    \item $\neg (A'_{x_{i},v_{j}}\land A'_{x_{i},v_{l}})$ for any $v_{l},v_{j}\in F$ such that $v_{l}\not=v_{j}$ and each $x_{i}\in V$.
    \item $A_{x_{i},v_{1}}'\lor A_{x_{i},v_{2}}'...\lor A_{x_{i},v_{k}}'$ for each $x_{i}\in V$. 
\end{enumerate}

By our assumption $V$ is well-orderable and $F$ is finite. So $F$ is well-orderable. Consequently, $V\times F$ is well-orderable.
We enumerate $Var=\{A'_{x_{i},v_{j}}:x_{i}\in V, v_{j}\in F\}$.
Fix $V'\in [V]^{<\omega}$. Let $W$ be the subspace of $V$ generated by the finite set $V'\cup \{a\}$. We can see that $W$ is finite since $F$ is finite. Consequently, a linear functional $f:W\rightarrow F$ with $f(a)=1$ can be constructed in ZF. 
Following the methods used in the proof of \textbf{Observation 3.2}, we can obtain a non-zero linear functional $f:V\rightarrow F$.
\end{proof}

\begin{obs}
For every $3\leq k<\omega$, the statement $\mathcal{P}_{k}$ for the graph $G_{1}$ on some well-orderable set of vertices is provable under ZF.
\end{obs}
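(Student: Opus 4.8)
The plan is to establish the two inequalities $\chi(E_{G_1\times G_2})\le k$ and $\chi(E_{G_1\times G_2})\ge k$ separately. The upper bound is immediate in ZF: a good $k$-colouring $f$ of $G_1$ produces the good colouring $F(\langle x,y\rangle)=f(x)$ of $G_1\times G_2$, as noted in the preliminaries, so $\chi(E_{G_1\times G_2})\le k$ with no appeal to choice. The entire content is therefore the lower bound $\chi(E_{G_1\times G_2})\ge k$, and the idea is to replace the (possibly infinite) factor $G_1$ by a \emph{finite} subgraph that still carries the full chromatic number $k$. This replacement is exactly the step where the well-orderability of $V_{G_1}$ is spent.

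First I would extract a finite core. Since $\chi(E_{G_1})=k$, the graph $G_1$ admits no good $(k-1)$-colouring, i.e.\ no homomorphism into the complete graph $K_{k-1}$ (note $k-1\ge 2$ as $k\ge 3$). Because $V_{G_1}$ is well-orderable and $K_{k-1}$ is a finite graph, the earlier Observation establishing the homomorphism statement in ZF for a graph on a well-orderable vertex set (which itself rests on \textbf{Lemma 3.1}) applies with target $H=K_{k-1}$. Its contrapositive yields a \emph{finite} subgraph $G_1^{*}\subseteq G_1$ admitting no homomorphism into $K_{k-1}$, i.e.\ $\chi(E_{G_1^{*}})\ge k$. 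As $G_1^{*}$ is a subgraph of the $k$-colourable $G_1$, restriction of a good $k$-colouring gives $\chi(E_{G_1^{*}})\le k$, so $\chi(E_{G_1^{*}})=k$ with $V_{G_1^{*}}$ finite.

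The remainder is pure ZF combinatorics on $G_1^{*}\times G_2$, following Hajnal. Suppose for contradiction that $G_1^{*}\times G_2$ has a good colouring $c$ with $m<k$ colours. For $y\in V_{G_2}$ set $c_y\colon V_{G_1^{*}}\to\{0,\dots,m-1\}$, $c_y(x)=c(\langle x,y\rangle)$. Since $V_{G_1^{*}}$ is finite, the map $y\mapsto c_y$ colours $V_{G_2}$ with the \emph{finite} palette $\{0,\dots,m-1\}^{V_{G_1^{*}}}$, and it is defined with no choice. I claim it is a good colouring of $G_2$: if $\{y,y'\}\in E_{G_2}$ and $c_y=c_{y'}=\phi$, then for every edge $\{x,x'\}\in E_{G_1^{*}}$ the pair $\{\langle x,y\rangle,\langle x',y'\rangle\}$ is an edge of the product, whence $\phi(x)=c(\langle x,y\rangle)\ne c(\langle x',y'\rangle)=\phi(x')$; thus $\phi$ would be a good colouring of $G_1^{*}$ with $m<k$ colours, contradicting $\chi(E_{G_1^{*}})=k$. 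Hence $c_y\ne c_{y'}$ for adjacent $y,y'$, so $\chi(E_{G_2})\le m^{|V_{G_1^{*}}|}<\omega$, contradicting $\chi(E_{G_2})\ge\omega$. Therefore $\chi(E_{G_1^{*}\times G_2})\ge k$, and since $G_1^{*}\times G_2$ is a subgraph of $G_1\times G_2$ we obtain $\chi(E_{G_1\times G_2})\ge k$, which together with the upper bound gives $\chi(E_{G_1\times G_2})=k$.

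The main obstacle to get right is the passage to the finite core $G_1^{*}$. In full generality this is the De Bruijn--Erd\H{o}s/BPI step and genuinely uses choice, but the hypothesis that $V_{G_1}$ is well-orderable makes the compactness of $2^{V_{G_1}\times(k-1)}$ available in ZF via \textbf{Lemma 3.1}, so it goes through. Everything after that is choice-free: once $G_1^{*}$ is finite, the colouring $y\mapsto c_y$ automatically takes only finitely many values, and the conclusion is an elementary contradiction with $\chi(E_{G_2})\ge\omega$ that never touches $V_{G_2}$, which is why no well-ordering of the second factor is needed.
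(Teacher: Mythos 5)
Your proof is correct, but it follows a genuinely different route from the paper's. The paper runs Komj\'{a}th's ultrafilter-lemma argument directly on the infinite factor: it defines the sets $A_{x,c}=\{y\in V_{G_{2}}:F(x,y)=c\}$ for \emph{all} $x\in V_{G_{1}}$, proves the finite selection claim (\textbf{Claim 3.8}), and then applies Loeb-type compactness to the product space $\{1,\dots,k-1\}^{V_{G_{1}}}$ to glue the finite selections into a global $f$ with all pairwise intersections $A_{x,f(x)}\cap A_{x',f(x')}$ non-independent, from which the contradiction follows. You instead spend the well-orderability of $V_{G_{1}}$ once, at the outset, to extract a finite core: applying \textbf{Observation 3.4} in contrapositive with $H=K_{k-1}$ to get a finite $G_{1}^{*}\subseteq G_{1}$ with $\chi(E_{G_{1}^{*}})=k$, after which the argument is Hajnal's elementary finite computation ($y\mapsto c_{y}$ is a good colouring of $G_{2}$ by at most $m^{|V_{G_{1}^{*}}|}$ colours, contradicting $\chi(E_{G_{2}})\ge\omega$), with no further appeal to compactness or to any structure on $V_{G_{2}}$. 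Both proofs ultimately rest on \textbf{Lemma 3.1}, but applied to different objects. Your version cleanly isolates the single choice-sensitive step and in effect shows that $\mathcal{P}_{k}$ reduces to $\mathcal{P}_{G,H}$ for $H=K_{k-1}$, which is a tidy extra observation; the paper's version has the advantage that its \textbf{Claim 3.8} and the selection-function formulation are reused verbatim in \textbf{Theorem 5.1}, where compactness is replaced by the consistent-choice principle $F_{k-1}$ --- a deduction your finite-core route would not directly yield.
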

\begin{proof}
Fix $3\leq k<\omega$. Suppose $ \chi(E_{G_{1}})=k$, $\chi(E_{G_{2}})\geq\omega$ and $G_{1}$ is a graph on some well-orderable set of vertices. First we observe that if $g:V_{G_{1}}\rightarrow \{1,...,k\}$ is a good $k$-coloring of $G_{1}$, then $G(\langle x,y\rangle)=g(x)$ is a good $k$-coloring of $G_{1}\times G_{2}$. So, $\chi(E_{G_{1}\times G_{2}})\leq k$. For the sake of contradiction assume that $F:V_{G_{1}}\times V_{G_{2}}\rightarrow \{1,...,k-1\}$ is a good coloring of $G_{1}\times G_{2}$. 
For each color $c\in \{1,...,k-1\}$ and each vertex $x\in V_{G_{1}}$ we let $A_{x,c}=\{y\in V_{G_{2}}:F(x,y)=c\}$. 

\begin{claim}(ZF).
{\em For all finite $F\subset V_{G_{1}}$, there exists a mapping $i_{F}:F\rightarrow\{1,...,k-1\}$ such that for any $x,x'\in F$, $A_{x,i_{F}(x)}\cap A_{x',i_{F}(x')}$ is not independent.}
\end{claim}

\begin{proof}
Since any superset of non-independent set is non-independent, it is enough to show that for all finite $F\subset V_{G_{1}}$, there exists an $i_{F}:F\rightarrow\{1,...,k-1\}$ such that $\cap_{x\in F} A_{x,i_{F}(x)}$ is not independent. For the sake of contradiction assume that there exist a finite $F\subset V_{G_{1}}$ such that for all $i_{F}:F\rightarrow\{1,...,k-1\}$, $\cap_{x\in F} A_{x,i_{F}(x)}$ is independent. Now, $V_{G_{2}}=\cup_{i_{F}:F\rightarrow\{1,...,k-1\}} \cap_{x\in F} A_{x,i_{F}(x)}$. Thus $V_{G_{2}}$ can be written as a finite union of independent sets which contradicts the fact that $\chi(E_{G_{2}})$ is infinite. Thus for all finite $F\subset V_{G_{1}}$, we can obtain a mapping $i_{F}:F\rightarrow \{1,...,k-1\}$ such that $\cap_{x\in F} A_{x,i_{F}(x)}$ is not independent.
\end{proof}

\begin{figure}[!ht]
\centering
\begin{minipage}{\textwidth}
\centering
\begin{tikzpicture}[scale=7]
\draw (-0.2,0.0) -- (1.5,0);
\draw (1.5,0) node[above] {$V_{G_{1}}$};
\draw (0,0.4) ellipse (0.1cm and 0.3cm);
\draw (0,0) node[above] {$x$};
\draw (0.4,0) node[above] {$y$};
\draw (-0.08,0.20) -- (0.08,0.20);
\draw (-0.1,0.40) -- (0.1,0.40);
\draw (-0.08,0.60) -- (0.08,0.60);
\draw (0,0.70) node[above] {$V_{G_{2}}$};
\draw (0.01,0.25) node[above] {$A_{x,f(x)}$};
\draw (0.01,0.25) -- (-0.16,0.35);
\draw (-0.22,0.335) node[above] { . . .};
\draw (0.4,0.4) ellipse (0.1cm and 0.3cm);
\draw (0.32,0.20) -- (0.48,0.20);
\draw (0.3,0.40) -- (0.5,0.40);
\draw (0.32,0.60) -- (0.48,0.60);
\draw (0.41,0.45) node[above] {$A_{y,f(y)}$};
\draw (0.01,0.25) -- (0.41,0.45);
\draw (0.4,0.70) node[above] {$V_{G_{2}}$};
\draw (0.81,0.25) -- (0.41,0.45);
\draw (0.87,0.23) node[above] { . . .};
\end{tikzpicture}
\end{minipage}

\caption{\em A map $f:V_{G_{1}}\rightarrow \{1,...,k-1\}$ such that intersection of any two elements in $\{A_{x,f(x)}:x\in V_{G_{1}}\}$ is not independent.}
\end{figure}

Endow $\{1,2,...,k-1\}$ with the discrete topology. Since $V_{G_{1}}$ is well-orderable,  $\{1,2,...,k-1\}\times V_{G_{1}}$ is well-orderable under ZF. Applying \textbf{Theorem 1} of \cite{Loeb1965}, $\{1,2,...,k-1\}^{V_{G_{1}}}$ is compact.
For $s\in [V_{G_{1}}]^{<\omega}$, define $F_{s}=\{f\in \{1,2,...,k-1\}^{V_{G_{1}}}: x,y\in s, x\not=y\rightarrow A_{x,f(x)}\cap A_{y,f(y)}$ is not independent$\}$. By \textbf{claim 3.8}, for each $s\in [V_{G_{1}}]^{<\omega}$ we have that $F_{s}$ is non-empty. We can see that $\{F_{s}:s\in [V_{G_{1}}]^{<\omega}\}$ has finite intersection property as $F_{s_{0}\cup...\cup s_{k}}\subseteq F_{s_{0}}\cap ... \cap F_{s_{k}}$. Thus by compactness of $\{1,2,...,k-1\}^{V_{G_{1}}}$, there is a $f\in \bigcap\{F_{s}:s\in [V_{G_{1}}]^{<\omega}\}$.
Clearly, for any $x,x'\in V_{G_{1}}$, $A_{x,f(x)}\cap A_{x',f(x')}$ is not independent (see \textbf{figure 2}). 
Since $x\rightarrow f(x)$ is not a good coloring in $G_{1}$ as $\chi(E_{G_{1}})=k$, there are $x,x'\in V_{G_{1}}$ with $f(x)=f(x')=j$ and $\{x,x'\}\in E_{G_{1}}$. Consequently, $A'=A_{x,f(x)}\cap A_{x',f(x')}$ is not independent. Pick $y,y'\in A'$ joined by an edge in $E_{G_{2}}$. Then $(x,y)$ and $(x',y')$ are joined in $E_{G_{1}}\times E_{G_{2}}$ and get the same color $j$ which is a contradiction to the fact that $F$ is a good coloring of $G_{1}\times G_{2}$. 
\end{proof}

\subsection{On partially ordered sets based on a well-ordered set of elements} The first author modifies the arguments from \textbf{Claim 5} of \cite{Tac2016} and observes the following.  

\begin{obs} The following holds.
\begin{enumerate}
    \item 
$UT(\aleph_{0}, \aleph_{0},
    \aleph_{0})$ implies {\em `If in a partially ordered set based on a well-ordered set of elements, all chains are finite and all antichains are countable, then the set is countable'}.
    
    \item $UT(\aleph_{\alpha}, \aleph_{\alpha},\aleph_{\alpha})$ implies {\em `If in a partially ordered set based on a well-ordered set of elements, all chains are finite and all antichains have size $\aleph_{\alpha}$, then the set has size $\aleph_{\alpha}$'} for any regular $\aleph_{\alpha}$.
\end{enumerate}
\end{obs}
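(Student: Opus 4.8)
The plan is to prove part (2) for an arbitrary regular $\aleph_\alpha$ and to read off part (1) as the case $\alpha=0$ (where $UT(\aleph_0,\aleph_0,\aleph_0)$ replaces $UT(\aleph_\alpha,\aleph_\alpha,\aleph_\alpha)$ and ``countable'' means ``of size $\le\aleph_0$''). Fix a well-ordering $<_W$ of the underlying set of $\mathbb{P}$. First I would note that $\mathbb{P}$ is well-founded as a partial order: if a nonempty $S\subseteq\mathbb{P}$ had no $\le$-minimal element, then repeatedly taking the $<_W$-least strict $\le$-predecessor inside $S$ yields an infinite descending chain, and this recursion needs no choice because $<_W$ resolves every selection canonically, contradicting the finiteness of chains. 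Hence the rank $\mathrm{rk}(x)=\sup\{\mathrm{rk}(y)+1:y<x\}$ is defined for all $x$, the level sets $L_\beta=\{x:\mathrm{rk}(x)=\beta\}$ partition $\mathbb{P}$, and each $L_\beta$ is an antichain (comparable elements have distinct ranks), so $|L_\beta|\le\aleph_\alpha$. Writing $\rho$ for the rank of $\mathbb{P}$, it remains to bound the number of levels by $|\rho|\le\aleph_\alpha$ and then to reassemble the union.

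The bound $|\rho|\le\aleph_\alpha$ is the main obstacle, since the naive greedy construction of a maximal antichain fails (the up- and down-cones of a single element may be large), so I would route it through a partition relation together with a regularity fact squeezed out of $UT$. First, $UT(\aleph_\alpha,\aleph_\alpha,\aleph_\alpha)$ forces $\aleph_\alpha^{+}$ to be regular: were $\mathrm{cf}(\aleph_\alpha^{+})\le\aleph_\alpha$, then $\aleph_\alpha^{+}$ would be a union of $\le\aleph_\alpha$ sets of size $\le\aleph_\alpha$, and padding the $\beta$-th set $S_\beta$ to $S_\beta\cup(\{\beta\}\times\aleph_\alpha)$ and filling the index set up to $\aleph_\alpha$ produces a family of exactly $\aleph_\alpha$ sets of size exactly $\aleph_\alpha$ whose union still has size $\aleph_\alpha^{+}$, contradicting $UT(\aleph_\alpha,\aleph_\alpha,\aleph_\alpha)$. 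Now suppose $|\rho|>\aleph_\alpha$, so $\rho\ge\aleph_\alpha^{+}$; using $<_W$ let $x_\beta$ be the $<_W$-least element of rank $\beta$ and put $R=\{x_\beta:\beta<\aleph_\alpha^{+}\}$, a well-ordered set of size $\aleph_\alpha^{+}$ in which all chains remain finite and all antichains have size $\le\aleph_\alpha$. Colouring the pairs of $R$ by incomparable/comparable and invoking the Erd\H{o}s--Dushnik--Miller relation $\aleph_\alpha^{+}\to(\aleph_\alpha^{+},\omega)^2$ yields either an antichain of size $\aleph_\alpha^{+}$ or an infinite chain; the former exceeds $\aleph_\alpha$ and the latter is forbidden, so both are impossible and $\rho<\aleph_\alpha^{+}$, i.e. $|\rho|\le\aleph_\alpha$. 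Here I would rely on the fact that for a regular, well-ordered cardinal the Erd\H{o}s--Dushnik--Miller relation is provable in $\mathrm{ZF}$: its transfinite recursion uses only $<_W$-least selections and the regularity of $\aleph_\alpha^{+}$ (to see that a union of fewer than $\aleph_\alpha^{+}$ bounded subsets stays bounded), and no choice beyond this.

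Finally I would reassemble. Now $\mathbb{P}=\bigcup_{\beta<\rho}L_\beta$ is a union of at most $\aleph_\alpha$ antichains each of size $\le\aleph_\alpha$; padding exactly as above to a family of $\aleph_\alpha$ sets of size $\aleph_\alpha$ and applying $UT(\aleph_\alpha,\aleph_\alpha,\aleph_\alpha)$ gives $|\mathbb{P}|\le\aleph_\alpha$, which is the assertion (with equality once $\mathbb{P}$ carries an antichain of size $\aleph_\alpha$, and ``$\mathbb{P}$ is countable'' in part (1)). I expect all the genuine delicacy to sit in the middle paragraph: checking that both the descending-chain and the rank recursions are honestly choice-free thanks to $<_W$, and that the $\mathrm{ZF}$ proof of the partition relation for the regular cardinal $\aleph_\alpha^{+}$ uses only the well-ordering and the regularity just extracted from $UT$, rather than any fragment of choice.
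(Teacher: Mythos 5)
Your argument is correct, but it takes a genuinely different route from the paper's. The paper argues by contraposition and builds the forbidden infinite chain directly: it stratifies $P$ into iterated sets of minimal elements (each a small antichain, using well-foundedness exactly as you do), and at each of countably many stages invokes $UT(\aleph_{\alpha},\aleph_{\alpha},\aleph_{\alpha})$ to find the $\preceq$-least minimal element whose upward cone $P_{p}=\{q: p\leq q\}$ still has size $>\aleph_{\alpha}$; the resulting sequence $p_{0}<p_{1}<\cdots$ is an infinite chain, contradiction. You instead decompose $P$ into rank levels $L_{\beta}$, bound the number of levels by first squeezing the regularity of $\aleph_{\alpha}^{+}$ out of $UT(\aleph_{\alpha},\aleph_{\alpha},\aleph_{\alpha})$ and then applying the Erd\H{o}s--Dushnik--Miller relation $\aleph_{\alpha}^{+}\to(\aleph_{\alpha}^{+},\omega)^{2}$ to a transversal of the levels, and finally reassemble with one more application of $UT$. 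The paper's proof is shorter and entirely elementary --- the only combinatorial input is ``some cone stays large'' --- whereas yours is more structural and isolates precisely where $UT$ enters (regularity of the successor; the final union) and where the finiteness of chains enters (the partition relation). The price is a nontrivial side lemma: the ZF-provability of Dushnik--Miller for a regular well-ordered cardinal. Your sketch of why it is choice-free is right (the dichotomy and both recursions use only $<_{W}$-least selections, and a union of fewer than $\aleph_{\alpha}^{+}$ subsets of $\aleph_{\alpha}^{+}$ each of size $\leq\aleph_{\alpha}$ is bounded by regularity), but this would have to be written out in full, since the general Dushnik--Miller theorem does use choice for singular cardinals. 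Both proofs establish the observation.
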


\begin{proof}
We prove \textbf{Observation 3.9 (1)}. For the sake of contradiction, assume that all antichains of $(P,\leq)$ are countable, all chains of $(P,\leq)$ are finite, but the set $P$ is uncountable and well-ordered. We construct an infinite chain in $(P,\leq)$ using $UT(\aleph_{0}, \aleph_{0}, \aleph_{0})$ and obtain the desired contradiction.

\begin{claim}
{\em $\leq$ is a well-founded relation on $P$ i.e., every non-empty subset of $P$ has a $\leq$-minimal element.}
\end{claim}

\begin{proof}
Let $P$ is well-orderable, say by $\preceq$. We claim that $\leq$ is a well-founded relation on $P$. Otherwise, there is a nonempty subset $P_{1}\subseteq P$ with no minimal elements. Consequently, using the fact that $\preceq$ is a well-ordering in $P$, we can obtain a strictly $\leq$-decreasing sequence of elements of $P_{1}$. This contradicts the assumption that $P$ has no infinite chains.
\end{proof}

Without loss of generality we may assume $P=\cup\{P_{\alpha}:\alpha<\kappa\}$ where $\kappa$ is a well-ordered cardinal, $P_{0}$ is the set of minimal elements of $P$ and for each $\alpha<\kappa$, $P_{\alpha}$ is the set of minimal elements of $P\backslash \cup\{P_{\beta}:\beta<\alpha\}$. For each $\alpha<\kappa$, $P_{\alpha}$ is countable since $P_{\alpha}$ is an antichain.
\begin{itemize}
    \item We note that $P=\cup\{P_{p}:p\in P_{0}\}$ where $P_{p}=\{q\in P:p\leq q\}$. Since $P$ is uncountable and $P_{0}$ is countable, $P_{p}$ is uncountable for some $p\in P_{0}$. Otherwise for all $p\in P_{0}$, $P_{p}$ is either countable or finite and $UT(\aleph_{0}, \aleph_{0}, \aleph_{0})$ + $UT(\aleph_{0}, fin, \aleph_{0})$ implies $P$ is countable which is a contradiction.
    Now $UT(\aleph_{0}, \aleph_{0}, \aleph_{0})$ implies $UT(\aleph_{0}, fin, \aleph_{0})$ in ZF, thus $UT(\aleph_{0}, \aleph_{0}, \aleph_{0})$ suffices.
    Since $\{q\in P_{0}:P_{q}$ is uncountable$\}$ is a non-empty subset of $P$, we can find a least $p_{0}\in P_{0}$ with respect to $\preceq$ such that $P_{p_{0}}$ is uncountable.
    
    \item Let us consider $P'=P_{p_{0}}\backslash \{p_{0}\}$. So, $P'$ is uncountable. Again if $P'_{1}$ is the set of minimal elements of $P'$, we can write $P'=\cup\{P_{p}:p\in P'_{1}\}$ where $P_{p}=\{q\in P:p\leq q\}$.
    Since $P'$ is uncountable and $P'_{1}$ is countable (since all antichains of $(P,\leq)$ are countable by assumption), once again applying $UT(\aleph_{0}, \aleph_{0}, \aleph_{0})$ as in the previous paragraph, $P_{p}$ is uncountable for some $p\in P'_{1}$. 
    Since $\{q\in P'_{1}:P_{q}$ is uncountable$\}$ is a non-empty subset of $P$, we can find a least $p_{1}\in P'_{1}$ with respect to $\preceq$ such that $P_{p_{1}}$ is uncountable.
    We can see that $p_{0}<p_{1}$. 
\end{itemize}

Continuing this process step by step we obtain a sequence $\langle p_{n}:n\in\omega\rangle$ of elements of $P$ such that $p_{n}<p_{n+1}$ for each $n\in \omega$. Consequently, we obtain an infinite chain.

\textbf{Remark.} Similarly for any regular $\aleph_{\alpha}$, assuming $UT(\aleph_{\alpha},\aleph_{\alpha},\aleph_{\alpha})$ we can prove the following since alephs are well-ordered. {\em `If in a partially ordered set based on well-ordered set of elements, all chains are finite and all antichains have size $\aleph_{\alpha}$, then the set has size $\aleph_{\alpha}$.'} Consequently, we can prove \textbf{Observation 3.9(2)}.
\end{proof}

\section{Consistency results}
\begin{thm}
{\em For every natural number $n\geq 2$, there is a permutation model $\mathcal{N}$ of $ZFA$ where
 CAC holds and $AC_{n}^{-}$ fails. Moreover, we can observe the following in the model.
\begin{enumerate}
    \item CS, as well as CWF, holds.
    \item DT holds, $MA(\aleph_{0})$ fails, MCC fails, LT fails. 
    \item If in a partially ordered set, all chains are finite and all antichains are countable, then the set is countable. Moreover, if in a partially ordered set, all chains are finite and all antichains have size $\aleph_{\alpha}$, then the set has size $\aleph_{\alpha}$ for any regular $\aleph_{\alpha}$.
    \item The following statements hold.
    \begin{enumerate}
        \item Marshall Hall's theorem for linearly-ordered collection of finite subsets of a set.
        \item For every $3\leq k<\omega$,  $\mathcal{P}_{k}$ holds for any graph $G_{1}$ on some linearly-orderable set of vertices.
        \item For an infinite  graph $G=(V_{G}, E_{G})$ on a linearly-ordered set of vertices $V_{G}$ and a finite graph $H=(V_{H}, E_{H})$, if every finite subgraph of $G$ has a homomorphism into $H$, then so has $G$.
        \item For every finite field $\mathcal{F}=\langle F,...\rangle$, for every nontrivial linearly-orderable vector space $V$ over $\mathcal{F}$, there exists a non-zero linear functional $f:V\rightarrow F$.
    \end{enumerate}
\end{enumerate}
}
\end{thm}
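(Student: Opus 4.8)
The plan is to realize $\mathcal{N}$ as the Fraenkel--Mostowski model of \textbf{Theorem 8} of \cite{HT2019}: fix the atoms $A=\bigcup_{i\in I}P_{i}$ partitioned into pairwise disjoint $n$-element blocks $P_{i}=\{a_{i,0},\dots,a_{i,n-1}\}$ indexed by an \emph{uncountable} set $I$, let $\mathcal{G}$ be the group of all permutations of $A$ respecting the partition $\{P_{i}:i\in I\}$ (permuting the blocks and permuting atoms inside blocks), and let $\mathcal{F}$ be the normal filter generated by the pointwise stabilizers $\mathrm{fix}_{\mathcal{G}}(E)$ of finite $E\subseteq A$. The index set must be uncountable: since CAC implies $AC^{\omega}_{fin}$ (Lemma 4.4 of \cite{Tac2019a}), a \emph{countable} rigid block family could not witness the failure of $AC^{-}_{n}$, so the two headline requirements force $|I|>\aleph_{0}$.

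First I would record the two headline facts. For $AC^{-}_{n}$: the family $\{P_{i}:i\in I\}$ lies in $\mathcal{N}$ with empty support, and any partial choice function $c$ on an infinite subfamily would have a finite support $E$; for all but finitely many $i$ the block $P_{i}$ is disjoint from $E$, so the block-internal symmetric group $S(P_{i})\subseteq\mathrm{fix}_{\mathcal{G}}(E)$ moves $c(P_{i})\in P_{i}$, a contradiction, so $AC^{-}_{n}$ fails. The hard part is CAC. Given an infinite poset $(P,\le)\in\mathcal{N}$ with support $E$, I would assume it has no infinite antichain, attach to each $p\in P$ a finite support $E_{p}\supseteq E$, and exploit the fact that $\mathrm{fix}_{\mathcal{G}}(E)$ acts highly homogeneously on the blocks disjoint from $E$: a sunflower/pigeonhole analysis of the supports together with the comparability pattern should reduce matters to finitely many ``support types'' and manufacture an infinite chain $p_{0}<p_{1}<\cdots$. \textbf{This Ramsey-type extraction is the main obstacle}, since it must be carried out with only the weak choice available inside $\mathcal{N}$; once CAC holds, $AC^{\omega}_{fin}$ and $UT(\aleph_{0},\aleph_{0},\aleph_{0})$ (and its regular $\aleph_{\alpha}$ analogues) follow from the tameness of well-ordered and countable families in $\mathcal{N}$.

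With these in hand I would dispatch parts (1)--(3). DT holds and LT fails in $\mathcal{N}$ by the facts already recorded for the model of \textbf{Theorem 8} of \cite{HT2019}. Because $\mathcal{G}$ moves blocks, $A$ is a \emph{bounded amorphous} set (every finitary partition of $A$ has block-size bounded by $n$), so MCC fails by \textbf{Theorem 6} of \cite{Tac2017}; and since $AC^{\omega}_{fin}$ holds in $\mathcal{N}$ while $A$ is amorphous, \textbf{Remark 2.7} of \cite{Tac2016b} forces $MA(\aleph_{0})$ to fail. For CS and CWF I would construct the required cofinal objects by a support-guided recursion along a well-ordered skeleton of the poset modulo its support. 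For part (3) I would verify $UT(\aleph_{\alpha},\aleph_{\alpha},\aleph_{\alpha})$ in $\mathcal{N}$ and adapt \textbf{Observation 3.9}: the subtlety is that the ambient poset need not be well-orderable, so I must first check that finiteness of all chains makes $\le$ well-founded in $\mathcal{N}$ (an infinite descending sequence is an infinite chain), and then rerun the layer-by-layer counting, using $UT$ at each step to locate an uncountable up-set and thereby produce the forbidden infinite chain.

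Finally, for part (4) I would establish that LW holds in $\mathcal{N}$ (the amorphous, block-structured sets that obstruct well-ordering are never linearly orderable) and that $UT(WO,fin,WO)$ holds there (a well-orderable, hence rigid, family of finite sets has a well-orderable union). Granting LW, every linearly-orderable vertex set, collection, or vector space in $\mathcal{N}$ is well-orderable, whence (4)(b), (4)(c), (4)(d) reduce immediately to the ZF results \textbf{Observation 3.6}, \textbf{Observation 3.4}, and \textbf{Observation 3.5}, and (4)(a) reduces, via $UT(WO,fin,WO)$, to \textbf{Observation 3.2}. Verifying LW inside $\mathcal{N}$ is the remaining delicate point of this last step.
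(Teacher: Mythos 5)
Your proposal diverges from the paper at the very first step, and the divergence is fatal rather than cosmetic. The paper takes $\mathcal{N}$ to be \emph{exactly} the model of \textbf{Theorem 8} of \cite{HT2019}: the atoms form a \emph{countably infinite} union $\bigcup\{A_{i}:i\in\omega\}$ of $n$-blocks, and $\mathcal{G}$ consists only of permutations $\rho\phi_{\psi}$ that move \emph{finitely many} atoms (block permutations induced by finite permutations of $\omega$, within-block action by powers of a fixed $n$-cycle). Your argument that the index set ``must be uncountable'' is incorrect: although $I=\omega$ in the ground model, the block family $\mathcal{A}=\{A_{i}\}_{i\in\omega}$ is \emph{amorphous} in $\mathcal{N}$ (the group permutes the blocks), hence not countable there, so $AC^{\omega}_{fin}$ — which does hold, since CAC holds — simply does not apply to it. There is no tension to resolve by inflating $I$.

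Your modified model (uncountable $I$, the \emph{full} group of partition-respecting permutations) then undermines the mechanism that proves everything else. The engine of the paper's \textbf{Lemma 4.2} is that each orbit $Orb_{E}(p)$ is an antichain, proved from the fact that every $\pi\in\mathcal{G}$ satisfies $\pi^{k}=1_{A}$ for some finite $k$ (so $\phi(p)<\psi(p)$ yields $p<p$). With your group, elements of infinite order moving infinitely many atoms exist, this argument collapses, and with it the proofs of CS, CWF, DT, part (3), and the standard proof of LW (which you yourself flag as ``delicate'' but which is exactly the same finite-order argument, cf.\ \textbf{Lemma 4.9}). Finally, you leave CAC — which you correctly call the main obstacle — as an unexecuted ``sunflower/pigeonhole analysis''; the paper does not prove CAC at all but cites it from \cite{HT2019}, a citation you forfeit by changing the model. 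The remainder of your plan (amorphousness of $A$ giving the failure of $MA(\aleph_{0})$ and MCC, and the reduction of part (4) to the Section 3 observations via LW and $UT$) does match the paper's \textbf{Lemmas 4.3--4.6}, but only once the correct model and the finite-order property are restored.
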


\begin{proof}
In \textbf{Theorem 8} of \cite{HT2019}, Halbeisen and Tachtsis constructed a permutation model $\mathcal{N}$ where for arbitrary $n\geq 2$, $AC_{n}^{-}$ fails but CAC holds. We fix an arbitrary integer $n\geq 2$ and recall the model constructed in the proof of \textbf{Theorem 8} of \cite{HT2019} as follows.

\begin{itemize}
    \item \textbf{Defining the ground model $M$.} 
We start with a ground model $M$ of $ZFA+AC$ where $A$ is a countably infinite set of atoms written as a disjoint union $\cup\{A_{i}:i\in \omega\}$ where for each $i\in \omega$, $A_{i}=\{a_{i_{1}},a_{i_{2}},... a_{i_{n}}\}$. 
    \item \textbf{Defining the group $\mathcal{G}$  of permutations and the filter $\mathcal{F}$ of subgroups of $\mathcal{G}$.}
    \begin{itemize}
    \item \textbf{Defining $\mathcal{G}$.} $\mathcal{G}$ is defined in \cite{HT2019} in a way so that \textbf{if $\eta\in \mathcal{G}$, then $\eta$ only moves finitely many
    atoms} and for all $i\in \omega$, $\eta(A_{i})=A_{k}$ for some $k\in \omega$. We recall the details from \cite{HT2019} as follows.
    For all $i\in \omega$, let $\tau_{i}$ be the $n$-cycle $a_{i_{1}}\mapsto a_{i_{2}}\mapsto ... a_{i_{n}}\mapsto a_{i_{1}}$. For every permutation $\psi$ of $\omega$, which moves only finitely many natural numbers, let $\phi_{\psi}$ be the permutation of $A$ defined by $\phi_{\psi}(a_{i_{j}})=a_{\psi(i)_{j}}$ for all $i\in \omega$ and $j=1,2,...,n$. Let $\eta\in \mathcal{G}$ if and only if $\eta=\rho \phi_{\psi}$ where $\psi$ is a permutation of $\omega$ which moves only finitely many natural numbers and $\rho$ is a permutation of $A$ for which there is a finite $F\subseteq\omega$ such that for every $k\in F$, $\rho\restriction A_{k}=\tau^{j}_{k}$ for some $j<n$, and $\rho$ fixes $A_{m}$ pointwise for every $m\in \omega\backslash F$. 
    \item \textbf{Defining $\mathcal{F}$.} Let $\mathcal{F}$ be the filter of subgroups of $\mathcal{G}$ generated by $\{$fix$_{\mathcal{G}}(E): E\in [A]^{<\omega}\}$. 
    \end{itemize}
    \item \textbf{Defining the permutation model.} Consider the permutation model $\mathcal{N}$ determined by $M$, $\mathcal{G}$ and $\mathcal{F}$.
\end{itemize}

Following \textbf{point 1} in the proof of \textbf{Theorem 8} of \cite{HT2019}, both $A$ and $\mathcal{A}=\{A_{i}\}_{i\in \omega}$ are amorphous in $\mathcal{N}$ and no infinite subfamily $\mathcal{B}$ of $\mathcal{A}$ has a Kinna--Wegner selection function. Consequently, $AC^{-}_{n}$ fails. The first author observes the following.

\begin{lem}
{\em In $\mathcal{N}$, DT, CS as well as CWF holds. Moreover the following holds in $\mathcal{N}$.
\begin{itemize}
    \item If in a partially ordered set, all chains are finite and all antichains are countable, then the set is countable.
    \item `If in a partially ordered set, all chains are finite and all antichains have size $\aleph_{\alpha}$, then the set has size $\aleph_{\alpha}$' for any regular $\aleph_{\alpha}$.
\end{itemize}
}
\end{lem}
\begin{proof}
We follow the steps below.

\begin{enumerate}
\item
Let $(P,\leq)$ be a p.o.set in $\mathcal{N}$ and $E\in [A]^{<\omega}$ be a support of $(P,\leq)$. We can write $P$ as a disjoint union of ﬁx$_{\mathcal{G}}(E)$-orbits, i.e., $P=\bigcup \{Orb_{E}(p):p 
\in P\}$, where $Orb_{E}(p)=\{\phi(p): \phi\in$ ﬁx$_{\mathcal{G}}(E)\}$ for all $p \in P$. The family $\{Orb_{E}(p): p \in P\}$ is well-orderable in $\mathcal{N}$ since ﬁx$_{\mathcal{G}}(E) \subseteq Sym_{\mathcal{G}}(Orb_{E}(p))$ for all $p \in P$. 
 
\item Since \textbf{if $\eta\in \mathcal{G}$, then $\eta$ only moves finitely many atoms}, $Orb_{E}(p)$ is an antichain in $P$ for each $p\in P$. Otherwise there is a $p\in P$, such that $Orb_{E}(p)$ is not an antichain in $(P,\leq)$. Thus, for some $\phi,\psi\in$ fix$_{\mathcal{G}}(E)$, $\phi(p)$ and $\psi(p)$ are comparable. Without loss of generality we may assume $\phi(p)<\psi(p)$. Since \textbf{if $\eta\in \mathcal{G}$, then $\eta$ only moves finitely many atoms}, there exists some $k<\omega$ such that $\phi^{k}=1_{A}$. Let $\pi=\psi^{-1}\phi$. Consequently, $\pi(p)<p$ and $\pi^{k}=1_{A}$ for some $k\in\omega$. Thus, $p=\pi^{k}(p)<\pi^{k-1}(p)<...<\pi(p)<p$. By transitivity of $<$, $p<p$, which is a contradiction.

\item We prove that in $\mathcal{N}$, $DT$ holds. Let $E\subset A$ be a finite support of an infinite p.o.set $\mathbb{P}=(P,<)$ with finite width. Then $P=\bigcup\{Orb_{E}(p): p\in P\}$. Following (2), $Orb_{E}(p)$ is an antichain in $\mathbb{P}$.
Consequently, $ Orb_{E}(p)$ is finite for each $p\in P$ since the width of $\mathbb{P}$ is finite. Following (1), $\{Orb_{E}(p): p\in P\}$ is well-orderable in $\mathcal{N}$. Following \textbf{point 4} in the proof of \textbf{Theorem 8} of \cite{HT2019} and \textbf{Lemma 3} of \cite{Tac2016}, UT(WO,WO,WO) holds in $\mathcal{N}$, and so $P$ is well-orderable in $\mathcal{N}$. Applying \textbf{Theorem 3.1(i) of \cite{Tac2019}}, DT holds in $\mathcal{N}$.

 \item To see that CS as well as CWF holds in $\mathcal{N}$ we follow \textbf{Theorem 3.26} of \cite{THS2016} and \textbf{Theorem 10(ii) of \cite{Tac2017}} respectively. We sketch the important steps below. 
 \begin{enumerate}
     \item We follow \textbf{Theorem 3.26} of \cite{THS2016} to see that CS holds in $\mathcal{N}$ as follows. Let $(P,\leq)$ be a poset without maximal elements supported by E. Following (1), $\mathcal{O}=\{Orb_{E}(p): p\in P\}$ is a well-ordered partition of $P$. Define $\preceq$ on $\mathcal{O}$, as $X\preceq Y\leftrightarrow \exists x\in X, \exists y\in Y$ such that $x\leq y$. Since $(P,\leq)$ has no maximal element, $(\mathcal{O},\preceq)$ has no maximal element following (2). 
     Since $\mathcal{O}$ is well-ordered there exists a partition $\mathcal{U}_{\mathcal{O}}=\{\mathcal{Q},\mathcal{R}\}$ of $\mathcal{O}$ in 2 cofinal subsets. Consequently, $\mathcal{U}_{P}=\{\cup\mathcal{Q},\cup\mathcal{R}\}$ is a partition of $P$ in 2 cofinal subsets.
     
     \item We follow \textbf{Theorem 10 (ii)} of \cite{Tac2017} to see that CWF holds in $\mathcal{N}$ as follows. Let $(P,\leq)$ be a poset supported by $\mathcal{N}$. Since $\mathcal{O}=\{Orb_{E}(p): p\in P\}$ is well-orderable, it has a cofinal well-founded subset $\mathcal{W}=\{W_{\alpha}:\alpha<\gamma\}$ such that for $\beta<\alpha$, $W_{\alpha}\not\preceq W_{\beta}$ for all $\beta,\alpha<\gamma$. Consequently, $C=\cup \mathcal{W}$ is a cofinal well-founded subset of $P$. 
 \end{enumerate}
 
 \item We show the following in $\mathcal{N}$.
\begin{center}
    {\em `If in a partially ordered set, all chains are finite and all antichains are countable, then the set is countable.'}
\end{center}
It is known that in every $FM$-model $UT(WO,WO,WO)$ implies $(\forall \alpha)UT(\aleph_{\alpha}, \aleph_{\alpha}, \aleph_{\alpha})$ (c.f. page 176 of \cite{HR1998}). Consequently, $UT(\aleph_{0}, \aleph_{0}, \aleph_{0})$ holds in $\mathcal{N}$. Let $(P,<)$ be an uncountable p.o.set in $\mathcal{N}$ where all antichains are countable and $E\in [A]^{<\omega}$ be a support of $(P,<)$. Following (1), $\mathcal{O}=\{Orb_{E}(p): p\in P\}$ is a well-ordered partition of $P$ since for all $p\in P$, $E$ is a support of $Orb_{E}(p)$.
Following (2), $Orb_{E}(p)$ is an antichain and hence countable. Consequently, $Orb_{E}(p)$ is well-orderable. Since $UT(WO,WO,WO)$ holds in $\mathcal{N}$, $P$ is well-orderable. By \textbf{Observation 3.9(1)}, since $UT(\aleph_{0}, \aleph_{0}, \aleph_{0})$ holds in $\mathcal{N}$, there is an infinite chain in $\mathcal{N}$. 
\item Following (5) and \textbf{Observation 3.9(2)}, we can prove the following in $\mathcal{N}$. 

\begin{center}
{\em `If in a partially ordered set $(P,<)$, all chains are finite and all antichains have size $\aleph_{\alpha}$, then the set has size $\aleph_{\alpha}$'.} 
\end{center}
\end{enumerate}
\end{proof}

\textbf{Remark.} The referee pointed out that the statements ‘If in a partially ordered set based on
a well-ordered set of elements all chains are finite and all antichains are countable then the set is
countable’ and ‘If in a partially ordered set based on a well-ordered set of elements all chains are
finite and all antichains have size $\aleph_{\alpha}$ then the set has size $\aleph_{\alpha}$’ are true in all Fraenkel-Mostowski
permutation models. So \textbf{Observation 3.9(1)} and \textbf{Observation 3.9(2)} are not needed in the proofs of parts (5) and (6) of the proof of \textbf{Lemma 4.2}.

\begin{lem}
{\em In $\mathcal{N}$, $MA(\aleph_{0})$ fails.}
\end{lem}

\begin{proof}
Since $A$ is amorphous, the statement `for all infinite $X$, $2^{X}$ is Baire' is false following \textbf{Remark 2.7} of \cite{Tac2016b}. Since CAC holds in $\mathcal{N}$, $AC_{fin}^{\omega}$ holds as well (c.f. \textbf{Lemma 4.4} of \cite{Tac2019a}). Consequently, $MA(\aleph_{0})$ fails following \textbf{Remark 2.7} of \cite{Tac2016b}.
\end{proof}

\begin{lem}
{\em In $\mathcal{N}$, $MCC$ fails.}
\end{lem}

\begin{proof}
Modifying the proof of \textbf{Theorem 8 (ii) of \cite{Tac2017}}, we can see that $n(A)= n$. Thus there is a bounded amorphous set A. Consequently, MCC fails by \textbf{Theorem 6} of \cite{Tac2017}.
\end{proof}

\begin{lem}
{\em In $\mathcal{N}$, LT fails.}
\end{lem}

\begin{proof}
Since $\mathcal{A}$ is an amorphous set of non-empty sets which has no choice function in $\mathcal{N}$, following \textbf{Lemma 4.1}(i)\cite{Tac2019a}, LT fails in $\mathcal{N}$.
\end{proof}

\begin{lem}
{\em In $\mathcal{N}$, the following statements hold for linearly-ordered structures.
\begin{enumerate}
        \item Marshall Hall's theorem for linearly-ordered collection of finite subsets of a set.
        \item For every $3\leq k<\omega$,  $\mathcal{P}_{k}$ holds for any graph $G_{1}$ on some linearly-orderable set of vertices.
        \item For an infinite  graph $G=(V_{G}, E_{G})$ on a linearly-ordered set of vertices $V_{G}$ and a finite graph $H=(V_{H}, E_{H})$, if every finite subgraph of $G$ has a homomorphism into $H$, then so has $G$.
        
        \item For every finite field $\mathcal{F}=\langle F,...\rangle$, for every nontrivial linearly-orderable vector space $V$ over $\mathcal{F}$, there exists a non-zero linear functional $f:V\rightarrow F$.
\end{enumerate}
}    
\end{lem}

\begin{proof}
Since $UT(WO,WO,WO)$ and LW holds in $\mathcal{N}$ (c.f. pt 4 and pt 3 in the proof of \textbf{Theorem 8} in \cite{HT2019}), (1), (2), (3) and (4) hold in $\mathcal{N}$ following the observations in \textbf{section 3}. 
\end{proof}
\end{proof}
\textbf{Remark 1.} In \textbf{Theorem 7} of \cite{Tac2019a}, Tachtsis generalized the above construction and proved that $AC^{LO}+LW\not\rightarrow LT$ by constructing a permutation model $\mathcal{N}$. Since $AC^{WO}$ holds in $\mathcal{N}$, $DC$ holds in $\mathcal{N}$ as well (c.f. \textbf{Theorem 8.2} of \cite{Jec1973}). We observe another standard argument to see that $DC$ holds in $\mathcal{N}$.
Since $\mathcal{I}$ is closed under countable unions in the model, we can see that DC holds in $\mathcal{N}$. Let $\mathcal{R}$ is a relation in $\mathcal{N}$ such that if $x\in dom(\mathcal{R})$, there exists a $y$ such that $xRy$. Consequently, there is a sequence $\langle x_{n}:n\in\omega\rangle$ in the ground model $M$ such that for each $n\in\omega$, $x_{n}Rx_{n+1}$. If $x_{n}$ is supported by $E_{n}$ for every $n\in\omega$, then $\langle x_{n}:n\in \omega\rangle$ is supported by $\cup_{n\in\omega} E_{n}$. Since $\mathcal{I}$ is closed under countable unions, the sequence $\langle x_{n}:n\in\omega\rangle$ is in $\mathcal{N}$. 

A class of models $M_{\aleph_{\alpha}}$ for any regular cardinal $\aleph_{\alpha}$ (similar to the model $M_{\aleph_{1}}$ constructed in \textbf{Theorem 7} of \cite{Tac2019a}) can be defined where $AC^{LO}$ and $LW$ holds but $LT$ fails, by replacing $\aleph_{1}$ by $\aleph_{\alpha}$. Moreover in $M_{\aleph_{\alpha}}$, $DC_{<\aleph_{\alpha}}$ holds since $\mathcal{I}$ is closed under $<\aleph_{\alpha}$ unions.  

\textbf{Remark 2.} In the permutation model $\mathcal{N}$ of \cite{Tac2016}, CS, as well as CWF, holds following the work in this section. Moreover, the following statement holds in $\mathcal{N}$, following the work in this section.

\begin{center}
    {\em `If in a partially ordered set, all chains are finite and all antichains are countable, then the set is countable.'}
\end{center}

\begin{thm}
{\em There is a permutation model $\mathcal{N}$ of $ZFA$, where there is an amorphous set.  Moreover, the following holds in $\mathcal{N}$.
\begin{enumerate}
    \item If in a partially ordered set, all chains are finite and all antichains are countable, then the set is countable.
    \item If in a partially ordered set, all chains are finite and all antichains have size $\aleph_{\alpha}$, then the set has size $\aleph_{\alpha}$ for any regular $\aleph_{\alpha}$.
\end{enumerate}}
\end{thm}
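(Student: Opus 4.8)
The plan is to take $\mathcal{N}$ to be the basic Fraenkel model: start from a ground model of $ZFA+AC$ with a countably infinite set $A$ of atoms, let $\mathcal{G}$ be the group of \emph{all} permutations of $A$, and let $\mathcal{F}$ be the normal filter generated by the pointwise stabilizers $\mathrm{fix}_{\mathcal{G}}(E)$ of finite $E\subseteq A$. It is classical that $A$ is amorphous in $\mathcal{N}$, so the content of the theorem is to verify the two poset statements there. The argument runs parallel to the proof of \textbf{Lemma 4.2} via the orbit decomposition of a poset; the one place where the basic Fraenkel model behaves differently is the claim that each orbit is an antichain, and this is exactly where the hypothesis ``all chains are finite'' must be used.

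So let $(P,\le)$ be a p.o.set in $\mathcal{N}$ with support $E\in[A]^{<\omega}$ in which all chains are finite and all antichains are countable (for part (1)), respectively of size at most $\aleph_{\alpha}$ (for part (2)). As in step (1) of \textbf{Lemma 4.2}, write $P=\bigcup\{Orb_{E}(p):p\in P\}$, where the family of $\mathrm{fix}_{\mathcal{G}}(E)$-orbits is well-orderable in $\mathcal{N}$. Next I would show that each $Orb_{E}(p)$ is an antichain: if $\phi(p)<\psi(p)$ for some $\phi,\psi\in\mathrm{fix}_{\mathcal{G}}(E)$, then $\pi=\psi^{-1}\phi\in\mathrm{fix}_{\mathcal{G}}(E)$ is an order-automorphism of $(P,\le)$ with $\pi(p)<p$, so applying $\pi$ repeatedly yields the strictly descending sequence $\cdots<\pi^{2}(p)<\pi(p)<p$, an infinite chain, contradicting the hypothesis. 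In \textbf{Lemma 4.2} the same conclusion came for free from the fact that every element of $\mathcal{G}$ moves only finitely many atoms and hence has finite order; in the basic Fraenkel model $\pi$ may have infinite order, so the chain hypothesis is genuinely needed here.

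By hypothesis each orbit, being an antichain, is countable respectively of size at most $\aleph_{\alpha}$, hence well-orderable in $\mathcal{N}$. An infinite $\mathrm{fix}_{\mathcal{G}}(E)$-orbit, however, cannot be well-orderable in the basic Fraenkel model: a well-ordering of it would be supported by some finite $F\supseteq E$ and would force the minimal support of each of its elements to lie inside the fixed finite set $F$, which is impossible once some element of the orbit has an atom outside $E$ in its support. Hence every orbit is finite, so $P$ is a well-ordered union of finite sets and $UT(WO,fin,WO)$ --- which holds in $\mathcal{N}$ --- makes $P$ well-orderable. Equivalently, one may cite that $UT(WO,WO,WO)$ holds in the basic Fraenkel model, which it does by the same support analysis (using that $\mathrm{Sym}(A\setminus E)$ has no proper subgroup of finite index, so any finite orbit is a singleton and any well-orderable member of the union is $E$-supported).

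Finally, $(P,\le)$ is now a p.o.set based on a well-ordered set of elements. By the referee's remark recorded after \textbf{Lemma 4.2}, the well-ordered-base forms of both statements hold in every Fraenkel--Mostowski model; alternatively, since $UT(WO,WO,WO)$ and therefore $UT(\aleph_{0},\aleph_{0},\aleph_{0})$ and $(\forall\alpha)UT(\aleph_{\alpha},\aleph_{\alpha},\aleph_{\alpha})$ hold in $\mathcal{N}$, \textbf{Observation 3.9(1)} and \textbf{(2)} apply directly and give that $P$ is countable (part (1)) respectively of size at most $\aleph_{\alpha}$ (part (2)). I expect the main obstacle to be precisely the well-orderability step: ruling out infinite orbits (equivalently, verifying the union principle in the basic Fraenkel model) rests on the rigidity of finite supports together with the absence of proper finite-index subgroups in $\mathrm{Sym}(A\setminus E)$, and this is the step that must be carried out with care.
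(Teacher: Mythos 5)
Your overall route is the same as the paper's: work in the basic Fraenkel model, decompose a poset $P\in\mathcal{N}$ into $\mathrm{fix}_{\mathcal{G}}(E)$-orbits, show each orbit is an antichain, conclude that $P$ is well-orderable via a union principle, and finish with Observation 3.9 (or the referee's remark). Your extra argument that a well-orderable orbit must in fact be a singleton is correct and gives a self-contained substitute for citing $UT(WO,WO,WO)$ in $\mathcal{N}_{1}$.

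There is, however, a genuine gap in your antichain step. From $\pi(p)<p$ with $\pi\in\mathrm{fix}_{\mathcal{G}}(E)$ you produce the descending sequence $\cdots<\pi^{2}(p)<\pi(p)<p$ and declare it an infinite chain ``contradicting the hypothesis''. But the hypothesis ``all chains are finite'' is a statement interpreted in $\mathcal{N}$: it only excludes infinite chains that are \emph{elements of} $\mathcal{N}$. An arbitrary $\pi\in\mathrm{fix}_{\mathcal{G}}(E)$ may move infinitely many atoms; such a $\pi$ is not itself an element of $\mathcal{N}$ (only permutations moving finitely many atoms are symmetric there), and the set $\{\pi^{n}(p):n\in\omega\}$ has no evident finite support, so there is no reason for it to belong to $\mathcal{N}$. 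Hence no contradiction with the chain hypothesis is actually obtained. The repair is precisely the support trick behind the paper's citation of Lemma 9.3 of Jech: let $F\supseteq E$ be a finite support of $p$ and choose $\pi'\in\mathrm{fix}_{\mathcal{G}}(E)$ agreeing with $\pi$ on $F$ and moving only finitely many atoms; then $\pi'^{-1}\pi$ fixes $F$ pointwise, so $\pi'(p)=\pi(p)<p$, and $\pi'$ has finite order $k$, whence $p=\pi'^{k}(p)<\cdots<\pi'(p)<p$, a contradiction. In particular, contrary to what you assert, the finite-chain hypothesis is not needed at this step: the finite-order argument of Lemma 4.2 does survive in the basic Fraenkel model once $\pi$ is replaced by such a finitary $\pi'$.
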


\begin{proof}
We consider the basic Fraenkel model (labeled as Model $\mathcal{N}_{1}$ in \cite{HR1998}) where 
`there are no amorphous sets' is false and $UT(WO,WO,WO)$ holds (c.f. \cite{HR1998}). 
Let $(P,\leq)$ be a p.o.set in $\mathcal{N}_{1}$, and $E$ be a ﬁnite support of $(P,\leq)$. By (1) in the proof of \textbf{Lemma 4.2}, $\mathcal{O}=\{Orb_{E}(p): p\in P\}$ is a well-ordered partition of $P$. Now for each $p\in P$, $Orb_{E}(p)$ is an antichain (c.f. the proof of \textbf{Lemma 9.3} in \cite{Jec1973}). Thus, by methods of \textbf{Lemma 4.2}, (1) and (2) hold in $\mathcal{N}_{1}$.
\end{proof}

\begin{thm}
{\em There is a permutation model of $ZFA$ where CS, as well as CWF, holds, but $AC^{\omega}_{fin}$ fails. Moreover, the following statements hold in the model.
    \begin{enumerate}
        \item For every $3\leq k<\omega$, $\mathcal{P}_{k}$ holds for any graph $G_{1}$ on some linearly-orderable set of vertices.
        \item For an infinite  graph $G=(V_{G}, E_{G})$ on a linearly-ordered set of vertices $V_{G}$ and a finite graph $H=(V_{H}, E_{H})$, if every finite subgraph of $G$ has a homomorphism into $H$, then so has $G$.
        
        \item For every finite field $\mathcal{F}=\langle F,...\rangle$, for every nontrivial linearly-orderable vector space $V$ over $\mathcal{F}$, there exists a non-zero linear functional $f:V\rightarrow F$.
    \end{enumerate}
}
\end{thm}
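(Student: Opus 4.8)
The plan is to produce a permutation model in which $AC^{\omega}_{fin}$ fails but enough structure survives so that CS, CWF, and the four linearly-ordered statements all hold. The natural candidate is a model of the same ``orbit-based'' type as $\mathcal{N}$ in \textbf{Theorem 4.1}, but built so that $AC^{\omega}_{fin}$ genuinely fails. A standard choice is the second Fraenkel model (or a model in which the atoms are arranged as a countable disjoint union of pairs with the group respecting this block structure, permuting blocks only finitely and swapping inside blocks), for which $AC^{\omega}_{fin}$ is known to fail while $UT(WO,WO,WO)$ and $LW$ continue to hold. I would first recall the precise construction and the two ambient facts I need from the literature, namely that $AC^{\omega}_{fin}$ fails and that both $LW$ and $UT(WO,WO,WO)$ hold in this model.

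With the model fixed, the core of the argument is exactly the machinery already developed in the proof of \textbf{Lemma 4.2}. First I would verify that for any p.o.set $(P,\leq)$ in the model with finite support $E$, the orbit partition $\mathcal{O}=\{Orb_{E}(p):p\in P\}$ is well-orderable (this is point (1) of \textbf{Lemma 4.2}, which uses only that $\mathrm{fix}_{\mathcal{G}}(E)\subseteq Sym_{\mathcal{G}}(Orb_{E}(p))$ and hence is generic to these FM constructions) and that each orbit $Orb_{E}(p)$ is an antichain (point (2), which uses only that every $\eta\in\mathcal{G}$ moves finitely many atoms). Granting these two structural facts, CS follows verbatim as in \textbf{Lemma 4.2}(4a): push the partial order down to $(\mathcal{O},\preceq)$, use well-orderability of $\mathcal{O}$ to split it into two cofinal pieces, and take unions. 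Likewise CWF follows as in \textbf{Lemma 4.2}(4b): a well-orderable poset has a cofinal well-founded subset, and pulling it back through the orbit partition yields one for $P$.

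For the four linearly-ordered statements, the key observation is that they were all established in ZF (or ZF $+ UT(WO,fin,WO)$) for well-ordered structures in \textbf{Observations 3.2--3.7}, and $LW$ transfers these to linearly-ordered structures. Concretely, since $LW$ holds in the model, any linearly-ordered vertex set, index set, or vector space is in fact well-orderable, so the ZF results of section 3 apply directly. I would therefore argue that $UT(WO,WO,WO)$ (which implies $UT(WO,fin,WO)$) together with $LW$ suffice, exactly as in \textbf{Lemma 4.7}: statement (1) follows from \textbf{Observation 3.6}, statement (2) from \textbf{Observation 3.4}, and statement (3) from \textbf{Observation 3.5}. (The Marshall Hall statement is not listed here, but the mechanism is identical via \textbf{Observation 3.2}.)

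The main obstacle is not the transfer arguments, which are routine once the two structural lemmas are in place, but rather pinning down a concrete permutation model that simultaneously kills $AC^{\omega}_{fin}$ and retains both $LW$ and the finite-orbit property that makes each $Orb_{E}(p)$ an antichain. In the model of \textbf{Theorem 4.1} the atoms were split into $n$-element blocks with an $n$-cycle structure; to break $AC^{\omega}_{fin}$ I need a model where a countable family of finite sets has no choice function, which typically means the supports cannot be leveraged to choose uniformly across infinitely many blocks. I expect the delicate point is checking that every group element still moves only finitely many atoms (so that the antichain argument of \textbf{Lemma 4.2}(2) goes through) while the filter is coarse enough that $AC^{\omega}_{fin}$ fails; once that balance is confirmed, the rest of the proof is a direct reprise of \textbf{Lemma 4.2} and \textbf{Lemma 4.7}.
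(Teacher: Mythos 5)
Your proposal follows essentially the same route as the paper: the paper uses L\'evy's model $\mathcal{N}_{6}$ (atoms partitioned into finite blocks $P_{n}$ of size $p_{n}$, group generated by the block cycles $\pi_{n}$, finite supports), in which every group element moves only finitely many atoms, so the orbit machinery of \textbf{Lemma 4.2} gives CS and CWF verbatim, a separate lemma shows LW holds, and statements (1)--(3) then follow from the ZF results of \textbf{Observations 3.4, 3.5, 3.7} exactly as you describe. Your choice of the second Fraenkel model (or its finitary variant) works just as well, since the antichain argument only needs every element of $\mathrm{fix}_{\mathcal{G}}(E)$ to have finite order, which is automatic for a group of involutions even when elements move infinitely many atoms.

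One assertion in your write-up is false, though harmlessly so here: $UT(WO,WO,WO)$ cannot hold in any model where $AC^{\omega}_{fin}$ fails on a countable family of finite sets, since $UT(WO,fin,WO)$ would make the union of the blocks well-orderable and hence yield a choice function for them. So in your model (and in the paper's $\mathcal{N}_{6}$) only LW is available, not $UT(WO,WO,WO)$. This costs nothing for the present theorem --- CS, CWF and statements (1)--(3) need only the orbit structure and LW --- but it is precisely why the Marshall Hall statement, whose proof via \textbf{Observation 3.2} genuinely uses $UT(WO,fin,WO)$, is omitted from this theorem; your parenthetical suggestion that it would go through by the identical mechanism does not survive in this model.
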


\begin{proof}
We recall the L\'{e}vy's permutation model (labeled as Model $\mathcal{N}_{6}$ in \cite{HR1998}).

\begin{itemize}
    \item \textbf{Defining the ground model $M$.} We start with a ground model $M$ of $ZFA+AC$ where $A$ is a countably infinite set of atoms written as a disjoint union $\cup\{P_{n}:n\in \omega\}$, where $P_{n}=\{a_{1}^{n},...a_{p_{n}}^{n}\}$ such that $p_{n}$ is the $n^{th}$-prime number. 
    \item \textbf{Defining the group $\mathcal{G}$  of permutations and the filter $\mathcal{F}$ of subgroups of $\mathcal{G}$.}
    \begin{itemize}
    \item \textbf{Defining $\mathcal{G}$.} $\mathcal{G}$ be the group generated by the following permutations $\pi_{n}$ of $A$. 
    
    \begin{center}
        $\pi_{n}: a_{1}^{n} \mapsto a_{2}^{n} \mapsto ... a_{p_{n}}^{n}\mapsto a_{1}^{n}$ and $\pi_{n}(x)=x$ for all $x\in A\backslash P_{n}$.
    \end{center}
    
    \item \textbf{Defining $\mathcal{F}$.} $\mathcal{F}$ be the filter of subgroups of $\mathcal{G}$ generated by $\{$fix$_{\mathcal{G}}(E): E\in [A]^{<\omega}\}$. 
    \end{itemize}
    \item \textbf{Defining the permutation model.} Consider the permutation model $\mathcal{N}_{6}$ determined by $M$, $\mathcal{G}$ and $\mathcal{F}$.
\end{itemize}

It is well-known that in $\mathcal{N}_{6}$, $AC^{\omega}_{fin}$ fails since $\{P_{i}:i\in \omega\}$ has no (partial) choice function (c.f. \cite{Jec1973}). Consequently, following \textbf{Lemma 4.4} of \cite{Tac2019a}, CAC fails in $\mathcal{N}_{6}$.
Since \textbf{every permutation $\phi\in\mathcal{G}$ moves only finitely many atoms}, following the arguments in \textbf{Lemma 4.2}, we can observe that CS, as well as CWF, holds in $\mathcal{N}_{6}$.

\begin{lem}
{\em In $\mathcal{N}_{6}$, LW holds.}
\end{lem}

\begin{proof}
Let $(X,\leq)$ be a linearly ordered set in $\mathcal{N}_{6}$ supported by E. We show fix$_{\mathcal{G}}E\subseteq$ fix$_{\mathcal{G}}X$ which implies that $X$ is well-orderable in $\mathcal{N}_{6}$. For the sake of contrary assume fix$_{\mathcal{G}}E\not\subseteq$ fix$_{\mathcal{G}}X$. So there is an element $y\in X$ which is not supported by $E$ and there is a $\phi\in$ fix$_{\mathcal{G}}E$ such that $\phi(y)\not=y$. Since $\phi(y)\not=y$ and $\leq$ is a linear order on $X$, we obtain either $\phi(y)<y$ or $y<\phi(y)$. Let $\phi(y)<y$. Since \textbf{every permutation $\phi\in\mathcal{G}$ moves only finitely many atoms} there exists some $k<\omega$ such that $\phi^{k}=1_{A}$. Thus, $p=\phi^{k}(p)<\phi^{k-1}(p)<...<\phi(p)<p$ which is a contradiction. Similarly we can arrive at a contradiction if we assume $y<\phi(y)$.
\end{proof}

Since LW holds in $\mathcal{N}_{6}$, we can observe (1), (2) and (3) in $\mathcal{N}_{6}$ by observations in \textbf{section 3}.
\end{proof}

\begin{thm}
{\em There is a permutation model of $ZFA$ where CS, as well as CWF, holds, but $LOKW_{4}^{-}$ fails. Moreover, the following statements hold in the model.
\begin{enumerate}
        \item For every $3\leq k<\omega$, $\mathcal{P}_{k}$ holds for any graph $G_{1}$ on some linearly-orderable set of vertices.
        \item For an infinite  graph $G=(V_{G}, E_{G})$ on a linearly-ordered set of vertices $V_{G}$ and a finite graph $H=(V_{H}, E_{H})$, if every finite subgraph of $G$ has a homomorphism into $H$, then so has $G$.
        
        \item For every finite field $\mathcal{F}=\langle F,...\rangle$, for every nontrivial linearly-orderable vector space $V$ over $\mathcal{F}$, there exists a non-zero linear functional $f:V\rightarrow F$.
    \end{enumerate}
}
\end{thm}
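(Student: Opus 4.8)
The plan is to build a L\'evy-type permutation model tailored so that the atoms split into countably many \emph{four-element} blocks on which the group acts by $4$-cycles, while the blocks themselves remain rigidly well-ordered. Concretely, I would start from a ground model $M$ of $ZFA+AC$ whose atoms are $A=\bigsqcup_{n\in\omega}B_n$ with $B_n=\{a^n_1,a^n_2,a^n_3,a^n_4\}$; let $\pi_n$ be the $4$-cycle $a^n_1\mapsto a^n_2\mapsto a^n_3\mapsto a^n_4\mapsto a^n_1$ fixing $A\setminus B_n$ pointwise, set $\mathcal G=\langle \pi_n:n\in\omega\rangle$ (so $\mathcal G\cong\bigoplus_{n}\mathbb Z/4\mathbb Z$), and let $\mathcal F$ be the filter of subgroups generated by $\{\mathrm{fix}_{\mathcal G}(E):E\in[A]^{<\omega}\}$. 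Let $\mathcal N$ be the resulting permutation model. This is the exact analogue of the model $\mathcal N_6$ of \textbf{Theorem 4.8}, with every prime block $P_n$ replaced by a four-element block carrying a $4$-cycle; in particular every $\phi\in\mathcal G$ moves only finitely many atoms and satisfies $\phi^4=1_A$.

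The decisive step is to refute $LOKW_4^{-}$ using the family $\mathcal B=\{B_n:n\in\omega\}$. Since each $B_n$ is setwise fixed by every generator, hence by all of $\mathcal G$, the map $n\mapsto B_n$ lies in $\mathcal N$, so $\mathcal B$ is an infinite well-orderable --- in particular linearly orderable --- family of four-element sets. Suppose toward a contradiction that $f\in\mathcal N$ were a partial Kinna--Wegner selection on $\mathcal B$, say with finite support $E$ and infinite domain. For all but finitely many $n$ we have $B_n\cap E=\emptyset$, whence $\pi_n\in\mathrm{fix}_{\mathcal G}(E)\subseteq \mathrm{sym}_{\mathcal G}(f)$ and $\pi_n(B_n)=B_n$, so $\pi_n\big(f(B_n)\big)=f(B_n)$. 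But $f(B_n)$ is a nonempty proper subset of $B_n$, and the key combinatorial observation is that $\langle\pi_n\restriction B_n\rangle\cong\mathbb Z/4\mathbb Z$ admits no nonempty proper invariant subset of $B_n$: the $4$-cycle moves each singleton, each triple, and each of the two diagonals $\{a^n_1,a^n_3\}$, $\{a^n_2,a^n_4\}$ off itself. This contradiction forces $\mathrm{dom}(f)$ to be finite, so no partial Kinna--Wegner selection exists and $LOKW_4^{-}$ fails (note this also re-derives $\neg AC_4^{-}$, since a partial choice function yields a partial Kinna--Wegner selection). I expect this invariant-subset verification, together with the bookkeeping that $f$ can only behave correctly on blocks disjoint from its support, to be the main obstacle, though both parts are elementary.

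The remaining clauses follow the templates already established in the paper. Because every element of $\mathcal G$ moves only finitely many atoms, the orbit analysis of \textbf{Lemma 4.2}(1)--(2) applies verbatim: for any p.o.set with support $E$, the $\mathrm{fix}_{\mathcal G}(E)$-orbits form a well-ordered partition into antichains, which yields CS by the two-cofinal-partition argument and CWF by the cofinal-well-founded-subset argument of \textbf{Lemma 4.2}(4). Likewise, the proof of \textbf{Lemma 4.9} goes through unchanged: if $(X,\le)$ is linearly ordered with support $E$ and some $\phi\in\mathrm{fix}_{\mathcal G}(E)$ moved a point $y\in X$, then finiteness of the order of $\phi$ would force $p=\phi^{k}(p)<\cdots<\phi(p)<p$, a contradiction; hence $\mathrm{fix}_{\mathcal G}(E)\subseteq\mathrm{fix}_{\mathcal G}(X)$ and LW holds in $\mathcal N$.

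Finally, with LW in hand, statements (1), (2) and (3) reduce to their well-ordered counterparts, which are theorems of $ZF$: a linearly-orderable set of vertices (respectively, a linearly-orderable vector space) becomes well-orderable, so \textbf{Observation 3.7} gives $\mathcal P_k$ for $G_1$ on well-orderable vertices, \textbf{Observation 3.5} gives the homomorphism-extension statement, and \textbf{Observation 3.6} gives the nonzero linear functional. This is exactly the concluding move of \textbf{Lemma 4.9}, so (1)--(3) hold in $\mathcal N$, completing the proof.
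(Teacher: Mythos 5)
Your proof is correct, but it takes a genuinely different route from the paper's. The paper does not build a new model: it invokes the permutation model $\mathcal{M}$ from the second assertion of \textbf{Theorem 10(ii)} of \cite{HT2019}, in which the atoms form $\kappa$-many $4$-element blocks and $\mathcal{G}$ is the weak direct product of the \emph{alternating} groups on the blocks; the failure of $LOKW_{4}^{-}$ and the validity of LW are then simply cited from \cite{HT2019}, and only the CS/CWF part (via the finitely-many-moved-atoms orbit analysis of \textbf{Lemma 4.2}) and the reduction of (1)--(3) to the Section 3 observations are carried out. You instead construct a L\'evy-type model with countably many $4$-element blocks acted on by $4$-cycles, and you verify everything from scratch: your key combinatorial point --- that a $4$-cycle admits no nonempty proper invariant subset of its block (singletons, pairs including the two diagonals, and triples are all moved) --- is correct and immediately kills every partial Kinna--Wegner selection on the well-orderable family $\{B_n\}_{n\in\omega}$, and your LW argument via $\phi^{k}=1_{A}$ is the same one the paper uses in \textbf{Lemma 4.9} for $\mathcal{N}_6$. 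What your approach buys is self-containment and a slightly simpler group ($\bigoplus_n\mathbb{Z}/4\mathbb{Z}$ rather than a product of copies of $A_4$); what the paper's approach buys is brevity and the reuse of a model whose finer choice-theoretic properties (the reason \cite{HT2019} takes alternating groups) are already catalogued. Both models satisfy the two properties that actually drive the remaining clauses --- every group element moves only finitely many atoms and has finite order --- so the rest of your argument matches the paper's step for step, including the correct references to \textbf{Observations 3.5, 3.6 and 3.7}.
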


\begin{proof}
We recall the permutation model $\mathcal{M}$ from the second assertion of \textbf{Theorem 10(ii) of \cite{HT2019}}. 

\begin{itemize}
\item \textbf{Defining the ground model $M$.} Let $\kappa$ be any infinite well-ordered cardinal number. We start with a ground model $M$ of $ZFA+AC$ where $A$ is a
$\kappa$-sized set of atoms written as a disjoint union $\cup\{A_{\alpha}:\alpha< \kappa\}$, where $A_{\alpha}=\{a_{\alpha,1},a_{\alpha,2},a_{\alpha,3},a_{\alpha,4}\}$ such that $\vert A_{\alpha}\vert=4$ for all $\alpha<\kappa$.
    \item \textbf{Defining the group $\mathcal{G}$  of permutations and the filter $\mathcal{F}$ of subgroups of $\mathcal{G}$.}
    \begin{itemize}
    \item \textbf{Defining $\mathcal{G}$.}
    Let $\mathcal{G}$ be the weak direct product of $\mathcal{G}_{\alpha}$'s where $\mathcal{G}_{\alpha}$ is the alternating group on $\mathcal{A}_{\alpha}$ for each $\alpha<\kappa$.
    
    \item \textbf{Defining $\mathcal{F}$.} Let $\mathcal{F}$ be the normal filter of subgroups of $\mathcal{G}$ generated by $\{$fix$_{\mathcal{G}}(E): E\in [A]^{<\omega}\}$. 
    \end{itemize}
    \item \textbf{Defining the permutation model.} Consider the permutation model $\mathcal{M}$ determined by $M$, $\mathcal{G}$ and $\mathcal{F}$.
\end{itemize}

In $\mathcal{M}$, $LOKW_{4}^{-}$ fails (c.f. \textbf{Theorem 10(ii) of \cite{HT2019}}).
Since \textbf{every permutation, $\phi\in\mathcal{G}$ moves only finitely many atoms}, following the arguments in \textbf{Lemma 4.2} we can observe that CS, as well as CWF, holds in $\mathcal{M}$. 
Since LW holds in $\mathcal{M}$ (c.f. \textbf{Theorem 10(ii) of \cite{HT2019}}), we can observe (1), (2) and (3) in $\mathcal{M}$ by observations in \textbf{section 3}.
\end{proof}
\section{Observations in Howard's model}
\begin{thm} 
{\em For any $3\leq k<\omega$, $\mathcal{P}_{k}$ follows from F$_{k-1}$ in ZF. Moreover, if $X\in \{AC_{3}, AC^{\omega}_{fin}\}$, then the statement $\mathcal{P}_{k}$ does not imply $X$ in ZFA when $k=3$.}
\end{thm}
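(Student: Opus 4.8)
The plan for the first assertion is to re-run the scheme of \textbf{Observation 3.7} verbatim, but to replace the appeal to Loeb's theorem (which needed $V_{G_1}$ well-orderable) by a single application of $F_{k-1}$. So I would fix $3\le k<\omega$, assume $\chi(E_{G_1})=k$ and $\chi(E_{G_2})\ge\omega$, note $\chi(E_{G_1\times G_2})\le k$ as there, and for contradiction assume a good coloring $F:V_{G_1}\times V_{G_2}\to\{1,\dots,k-1\}$, setting $A_{x,c}=\{y\in V_{G_2}:F(x,y)=c\}$. I would then package the search for the map $f$ of \textbf{Observation 3.7} as a consistent-choice instance: index a family by $V_{G_1}$, put $A_x=\{x\}\times\{1,\dots,k-1\}$ (so $\lvert A_x\rvert=k-1$), and define a symmetric relation $\mathcal{R}$ on $\bigcup_x A_x$ by declaring $(x,c)\,\mathcal{R}\,(x',c')$ exactly when $A_{x,c}\cap A_{x',c'}$ is not independent in $G_2$. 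The key observation is that \textbf{Claim 3.8}, proved in ZF, states precisely that for each finite $W\subseteq V_{G_1}$ the map $i_W$ is an $\mathcal{R}$-consistent choice function for $\{A_x\}_{x\in W}$ (its clause ``for any $x,x'\in W$'' also covers $x=x'$, so each selected $A_{x,i_W(x)}$ is itself non-independent, which takes care of reflexivity of consistency).

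Thus the hypothesis of $F_{k-1}$ is met, and since each $A_x$ has exactly $k-1$ elements, $F_{k-1}$ delivers a global $\mathcal{R}$-consistent choice function, i.e. an $f:V_{G_1}\to\{1,\dots,k-1\}$ with $A_{x,f(x)}\cap A_{x',f(x')}$ not independent for all $x,x'$. From here I would copy the final paragraph of \textbf{Observation 3.7}: because $\chi(E_{G_1})=k>k-1$, the map $x\mapsto f(x)$ is not a good coloring of $G_1$, so some $\{x,x'\}\in E_{G_1}$ has $f(x)=f(x')$; choosing an edge $\{y,y'\}$ inside the non-independent set $A_{x,f(x)}\cap A_{x',f(x')}$ makes $(x,y),(x',y')$ adjacent in $G_1\times G_2$ with $F(x,y)=F(x',y')$, contradicting goodness of $F$. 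This gives $F_{k-1}\to\mathcal{P}_k$ in ZF.

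For the non-implications I would, by the case $k=3$ just proved (where $F_{k-1}=F_2$), reduce the whole task to producing one model of ZFA in which $F_2$ holds while $AC_3$ and $AC^{\omega}_{fin}$ both fail: in such a model $\mathcal{P}_3$ holds (as $F_2\to\mathcal{P}_3$ in ZF) but neither $AC_3$ nor $AC^{\omega}_{fin}$ does. The model I would use is Howard's permutation model whose atoms split into countably many blocks $\{T_n:n\in\omega\}$ with $\lvert T_n\rvert=3$ and whose group acts within the blocks (by the alternating or symmetric groups) with finite supports. There the single countable family $\{T_n:n\in\omega\}$ of $3$-element sets has no choice function, by the routine support argument used throughout \textbf{Section 4} (any would-be choice function has a finite support $E$, yet infinitely many $T_n$ meet $E$ trivially and are moved transitively); this one family refutes $AC_3$ and $AC^{\omega}_{fin}$ simultaneously.

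The essential content, and the step I expect to be the main obstacle, is verifying $F_2$ in this model; this is exactly where the special structure of two-element sets is decisive, and I would invoke Howard's theorem that $F_2$ (in Cowen's sense, \cite{Cow1977}) holds there. I would also stress \emph{why} the Loeb's-theorem machinery of \textbf{Section 3} cannot simply be reused: that machinery proves PCC for \emph{every} well-orderable family of finite sets, hence, via the full relation, $AC$ for well-orderable families of finite sets, which fails here because $UT(WO,fin,WO)$ fails --- indeed $\{T_n:n\in\omega\}$ is itself a well-orderable family of finite sets whose union (the set of atoms) is not well-orderable. Hence the argument truly hinges on the fact that $F_2$, unlike $F_3$, can survive in a model where $AC_3$ collapses, and combining this model with the first assertion completes the proof for $k=3$.
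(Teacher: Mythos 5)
Your proposal is correct and follows essentially the same route as the paper: the paper also derives $F_{k-1}\rightarrow\mathcal{P}_{k}$ by turning the selection of the colors $f(x)$ into a consistent-choice instance over the $(k-1)$-element sets $\{x\}\times\{1,\dots,k-1\}$ with the ``not independent intersection'' relation, verifies the finite hypothesis via \textbf{Claim 3.8}, and finishes with the same edge-in-$A_{x,f(x)}\cap A_{x',f(x')}$ contradiction. For the second assertion the paper likewise invokes Howard's model from Section 3 of \cite{How1984}, where $F_{2}$ holds while the countable family of $3$-element blocks has no (partial) choice function, refuting $AC_{3}$ and $AC^{\omega}_{fin}$ exactly as you describe.
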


\begin{proof}
Fix $3\leq k<\omega$. Suppose $\chi(E_{G_{1}})=k$, $\chi(E_{G_{2}})\geq\omega$ and $G_{1}$ is a graph on some well-orderable set of vertices. First we observe that if $g:V_{G_{1}}\rightarrow \{1,...,k\}$ is a good $k$-coloring of $G_{1}$, then $G(\langle x,y\rangle)=g(x)$ is a good $k$-coloring of $G_{1}\times G_{2}$. So, $\chi(E_{G_{1}\times G_{2}})\leq k$. For the sake of contradiction assume that $F:V_{G_{1}}\times V_{G_{2}}\rightarrow \{1,...,k-1\}$ is a good coloring of $G_{1}\times G_{2}$. 
For each color $c\in \{1,...,k-1\}$ and each vertex $x\in V_{G_{1}}$ we let $A_{x,c}=\{y\in V_{G_{2}}:F(x,y)=c\}$. 
Define a relation $R$ on $\{1,...,k-1\}$ as $(v_{1},i)R(v_{2}, j)$ if and only if {\em `$v_{1}\not=v_{2}$ implies $A_{v_{1},i}\cap A_{v_{2},j}$ is not independent'} for $v_{1}, v_{2}\in V_{G_{1}}$.
By $F_{k-1}$ and \textbf{claim 3.8} there exist a choice function $f$ such that for any $x,x'\in V_{G_{1}}$, $A_{x,f(x)}\cap A_{x',f(x')}$ is not independent. 
Since $x\rightarrow f(x)$ is not a good coloring in $G_{1}$ as $\chi(E_{G_{1}})=k$, there are $x,x'\in V_{G_{1}}$ with $f(x)=f(x')=j$ and $\{x,x'\}\in E_{G_{1}}$. Consequently, $A'=A_{x,f(x)}\cap A_{x',f(x')}$ is not independent. Pick $y,y'\in A'$ joined by an edge in $E_{G_{2}}$. Then $(x,y)$ and $(x',y')$ are joined in $E_{G_{1}}\times E_{G_{2}}$ and get the same color $j$ which is a contradiction to the fact that $F$ is a good coloring of $G_{1}\times G_{2}$. 

For the second assertion, we consider the permutation model $\mathcal{N}$ from \textbf{section 3} of \cite{How1984} where $AC_{3}$ fails, and $F_{2}$ holds. Consequently, $\mathcal{P}_{3}$ holds in $\mathcal{N}$. In $\mathcal{N}$, there is a countable family $\mathcal{A}=\{A_{i}:i\in \omega\}$ which has no partial choice function. Consequently, $PAC^{\omega}_{fin}$ fails. Since $PAC^{\omega}_{fin}$ is equivalent to $AC^{\omega}_{fin}$ (see the proof of  \textbf{Lemma 4.4} of \cite{Tac2019a}), $AC^{\omega}_{fin}$ fails in $\mathcal{N}$.  
\end{proof}

\begin{question}
{\em If $k>3$, does UL follow from $\mathcal{P}_{k}$? Otherwise is there any model of ZF or ZFA, where $\mathcal{P}_{k}$ holds for $k>3$, but UL fails?}
\end{question}

\begin{thm}
{\em For any $2\leq k<\omega$, $\mathcal{P}_{G,H}$ restricted to finite graph $H$ with $k$ vertices follows from F$_{k}$ in ZF. Moreover, if $X\in \{AC_{3}, AC^{\omega}_{fin}\}$, then $\mathcal{P}_{G,H}$ restricted to finite graph $H$ with 2 vertices does not imply $X$ in ZFA.}
\end{thm}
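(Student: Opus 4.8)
The plan is to split the argument exactly as the statement does: first derive $\mathcal{P}_{G,H}$ for $|V_H|=k$ from $F_k$ in ZF, then feed the case $k=2$ into a known permutation model.

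For the first assertion, fix an infinite graph $G=(V_G,E_G)$ and a finite graph $H=(V_H,E_H)$ with $|V_H|=k$, and assume every finite subgraph of $G$ admits a homomorphism into $H$. I would encode the search for a homomorphism $G\to H$ as an instance of PCC restricted to $k$-element sets, paralleling the well-ordered case treated in \textbf{Observation 3.4} but replacing propositional compactness by $F_k$. For each $x\in V_G$ set $A_x=\{x\}\times V_H$, so that $\{A_x\}_{x\in V_G}$ is a family of pairwise disjoint sets each of size $k$, with $\bigcup_{x\in V_G}A_x=V_G\times V_H$. On this union define the symmetric relation $\mathcal{R}$ by declaring $(x,u)\,\mathcal{R}\,(y,v)$ to hold iff $\{x,y\}\in E_G$ implies $\{u,v\}\in E_H$. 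This relation is symmetric because $E_G$ and $E_H$ consist of unordered pairs, and it automatically forbids identifying the two endpoints of an edge when $H$ is loopless. A choice function for $\{A_x\}_{x\in V_G}$ is exactly a map $x\mapsto(x,\phi(x))$ for some $\phi\colon V_G\to V_H$, and such a choice function is $\mathcal{R}$-consistent iff $\{x,y\}\in E_G$ implies $\{\phi(x),\phi(y)\}\in E_H$ for all $x,y$, i.e. iff $\phi$ is a homomorphism $G\to H$.

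It then remains to check the finitary hypothesis of PCC. For a finite $W\subseteq V_G$, an $\mathcal{R}$-consistent choice function for $\{A_x\}_{x\in W}$ is precisely a homomorphism of the finite induced subgraph $G\restriction W$ into $H$, which exists by hypothesis. Since each $A_x$ has size $k$, $F_k$ applies and yields a global $\mathcal{R}$-consistent choice function, hence a homomorphism $G\to H$. This establishes $\mathcal{P}_{G,H}$ for $|V_H|=k$ from $F_k$ in ZF. The only point requiring care is the bookkeeping that makes ``$\mathcal{R}$-consistent choice function on $W$'' and ``homomorphism of $G\restriction W$'' literally the same object; once $\mathcal{R}$ is set up as above this identification is immediate, and this reduction is the real content of the proof.

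For the ``Moreover'' part I would specialize to $k=2$ and reuse the permutation model $\mathcal{N}$ from \textbf{section 3} of \cite{How1984}, the same model invoked in the proof of \textbf{Theorem 5.1}, in which $F_2$ holds while $AC_3$ fails. By the first assertion, $F_2$ gives $\mathcal{P}_{G,H}$ restricted to finite $H$ with $2$ vertices, so this restricted statement holds in $\mathcal{N}$, whereas $AC_3$ fails there by construction. For $AC^{\omega}_{fin}$ I would invoke the same observation as in \textbf{Theorem 5.1}: $\mathcal{N}$ contains a countable family $\{A_i:i\in\omega\}$ with no partial choice function, so $PAC^{\omega}_{fin}$, and hence $AC^{\omega}_{fin}$, fails in $\mathcal{N}$. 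Thus $\mathcal{P}_{G,H}$ restricted to $2$-vertex $H$ implies neither $AC_3$ nor $AC^{\omega}_{fin}$ in ZFA. The substantive work is entirely the encoding in the first two paragraphs; the independence is then a direct transfer through an off-the-shelf model.
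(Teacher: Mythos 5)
Your proposal is correct and follows essentially the same route as the paper's proof: the same family $A_x=\{x\}\times V_H$, the same consistency relation ``$\{x,y\}\in E_G$ implies $\{u,v\}\in E_H$'', the identification of $\mathcal{R}$-consistent (partial) choice functions with (partial) homomorphisms, an application of $F_k$, and the transfer through Howard's model from section 3 of \cite{How1984} where $F_2$ holds but $AC_3$ and $AC^{\omega}_{fin}$ fail.
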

\begin{proof}
Fix $2\leq k<\omega$. Let $V_{H}=\{v_{1},.... v_{k}\}$. For each $x\in V_{G}$, let $A_{x}=\{(x,v_{1}),...(x,v_{k})\}$. Define a relation $R$ on $\cup_{x\in V_{G}} A_{x}$ by $(x, v_{i})R(x', v_{j})$ if and only if {\em `$\{x,x'\}\in E_{G}$ implies $\{v_{i},v_{j}\}\in E_{H}$'}  for $(x,v_{i})\in A_{x}, (x', v_{j})\in A_{x'}$. 
By assumption, for all finite $F\subset V_{G}$, there exists a homomorphism $h_{F}:G\restriction F\rightarrow H$. For any finite $F\subset V_{G}$, and an homomorphism $h_{F}$ of $F$, let ${h_{F}}^{*}(j)=(j,h_{F}(j))$ for $j\in F$. Clearly, ${h_{F}}^{*}$ is an $R$-consistent choice function for $\{A_{x}\}_{x\in F}$. By $F_{k}$, there is a $R$-consistent choice function ${h_{F}}^{*}$ for $\{A_{x}\}_{x\in V_{G}}$. Define $h_{V_{G}}$ on $V_{G}$ by ${h_{V_{G}}}^{*}(j)=(j,h_{V_{G}}(j))$ for $j\in V_{G}$. Let $(j,j')\in E_{G}$ such that $j,j'\in V_{G}$. Since ${i_{I}}^{*}$ is $R$-consistent, $(j,h_{V_{G}}(j))R(j',h_{V_{G}}(j'))$. By the definition of $R$, $(h_{V_{G}}(j),h_{V_{G}}(j'))\in E_{H}$.

For the second assertion, we once more consider the permutation model $\mathcal{N}$ from \textbf{section 3} of \cite{How1984} where $AC_{3}$ and $AC^{\omega}_{fin}$ fails, and $F_{2}$ holds. Consequently, `$\mathcal{P}_{G,H}$ for a finite graph $H$ with 2 vertices' holds in $\mathcal{N}$. 
\end{proof}

\textbf{Acknowledgement.} We would like to thank the reviewer for reading the manuscript carefully and providing suggestions for improvement.

\end{document}